\theoremstyle{plain}
    \newtheorem{thm}{Theorem}[section]
    \newtheorem{lemma}[thm]{Lemma}
    \newtheorem{conjecture}[thm]{Conjecture}
    \newtheorem{theorem}[thm]{Theorem}
\theoremstyle{definition}
    \newtheorem{definition}[thm]{Definition}
    \newtheorem{remark}[thm]{Remark}
\theoremstyle{remark}
    \newtheorem{example}[thm]{Example}
\newcommand{\authorfootnotes}{\renewcommand\thefootnote{\@fnsymbol\c@footnote}}
\title[Backtracking gradient descent method for general $C^1$ functions]{Backtracking gradient descent method for general $C^1$ functions, with applications to Deep Learning}
\author{Tuyen Trung Truong}
\author{Tuan Hang Nguyen}
\address{Department of Mathematics, The University of Oslo, Blindern, 0316 Oslo, Norway}
\email{tuyentt@math.uio.no}
\address{Axon research, AXON}
\email{hnguyen@axon.com}
\thanks{}
\subjclass[2010]{49Mxx, 65Kxx, 68Txx, 68Uxx}
\keywords{Backtracking, Deep learning, Gradient descent,  Numerical method, Optimisation}
\date{\today}
\begin{document}

\maketitle

\begin{abstract}
 While Standard gradient descent is one very popular optimisation method, its convergence cannot be proven beyond the class of functions whose gradient is globally Lipschitz continuous. As such, it is not actually applicable to realistic applications such as Deep Neural Networks. In this paper, we prove that its backtracking variant behaves very nicely, in particular convergence can be shown for most functions (including all Morse functions). The main theoretical result of this paper is as follows.  

{\bf Theorem.}  Let $f:\mathbb{R}^k\rightarrow \mathbb{R}$ be a $C^1$ function, and $\{z_n\}$ a sequence constructed from the Backtracking gradient descent algorithm. (1) Either $\lim _{n\rightarrow\infty}||z_n||=\infty$ or $\lim _{n\rightarrow\infty}||z_{n+1}-z_n||=0$. (2) Assume that $f$ has at most countably many critical points. Then either $\lim _{n\rightarrow\infty}||z_n||=\infty$ or $\{z_n\}$ converges to a critical point of $f$.  (3) More generally, assume that all connected components of the set of critical points of $f$ are compact. Then either $\lim _{n\rightarrow\infty}||z_n||=\infty$ or $\{z_n\}$ is bounded. Moreover, in the latter case the set of cluster points of $\{z_n\}$ is connected.  

Some generalised versions of this result, including an inexact one where the search direction $v_n$ in $z_{n+1}-z_n=\delta _n v_n$ is only assumed to satisfy Armijo's condition and close to $\nabla f(x_n)$ in a certain sense, are included. The inexact version is then applied to propose backtracking versions for the popular algorithms MMT (momentum) and NAG (Nesterov accelerated gradient), which to our knowledge are new and for which convergence can be proven under assumptions more general than for the standard versions.  Another result in this paper concerns the problem of saddle points.  We then present a heuristic argument to explain why Standard gradient descent method works so well. The heuristic argument leads to modifications of the backtracking versions of GD (gradient descent), MMT and NAG, to  use in Deep Learning.  Experiments with datasets CIFAR10 and CIFAR100 on various popular architectures verify the heuristic argument also for the mini-batch practice and show that our new algorithms, while automatically fine tuning learning rates,  perform better than current state-of-the-art methods such as MMT, NAG, Adagrad, Adadelta, RMSProp, Adam and Adamax. Accompanying source codes are available on GitHub.

\end{abstract}



\maketitle

\section{Introduction}
Let $f:\mathbb{R}^m\rightarrow \mathbb{R}$ be a $C^1$ function, that is continuously differentiable. Denote by $\nabla f $ the gradient of $f$, that is for $x\in \mathbb{R}^m$:
\begin{eqnarray*}
\nabla f(x)=(\frac{\partial f}{\partial x_1}(x),\ldots , \frac{\partial f}{\partial x_m}(x)). 
\end{eqnarray*}
We also denote by $||\nabla f(x)||$ the usual Euclidean norm of the vector $\nabla f(x)\in \mathbb{R}^m$, i.e.
\begin{eqnarray*}
||\nabla f(x)||=\sqrt{[\frac{\partial f}{\partial x_1}(x)]^2+\ldots +[\frac{\partial f}{\partial x_m}(x)]^2}. 
\end{eqnarray*}

Finding critical points (in particular, minima) of such a function is an important problem faced often in science and technology.  Some most common methods are gradient(GD) and Newton's method. While Newton's method works better than GD in the local setting, it requires much stronger assumptions and not many general convergence results in the global setting are proven for it. The purpose of this paper is to demonstrate, both theoretically and experimentally, the advantage of GD (more precisely, a variant of it called Backtracking GD). 

GD was proposed by Cauchy \cite{cauchy} in 1847 to solve systems of non-linear equations, and has been extensively studied for many years since. It has many applications in science and technology, such as being main stream in artificial neural networks in machine learning (for an overview see \cite{bottou-etal, nielsen}) - which has obtained some specular achievements recently such as Alpha Go (in the Game of Go) and driverless cars. Three pioneers of Deep Neural Networks (DNN) were awarded the 2018 Turing prize, considered as the "Nobel prize of computing". 

The general scheme for this approximation method GD is as follows. We start from a {\bf random} point $z_0\in \mathbb{R}^m$, and then construct iteratively
\begin{equation}
z_{n+1}=z_n-\delta _n\nabla f(z_n),
\label{EquationGradientDescentGeneral}\end{equation}
where $\delta _n>0$ are appropriately chosen. In the literature, it is common to call $\delta _n$ learning rates or step sizes. The hope is that $x_n$ will converge to a (global) minimum. The intuition is taken from the familiar picture one obtains when $f$ is a convex function. Note that $z_0$ being random is important here: In \cite{Nesterov2}, one can find a function in $2$ variables and a specific choice of the point $z_0$ for which any sequence as in (\ref{EquationGradientDescentGeneral}), if converges to a critical point at all, can only converge to a saddle point. 

The most known GD method is Standard GD, where we  choose $\delta _n=\delta _0$ for all $n$. Hence, we start with a (randomly chosen) point $z_0\in \mathbb{R}^m$ and define the sequence 
\begin{equation}
z_{n}=z_{n-1}-\delta _0\nabla f(z_{n-1}).
\label{Equation1}\end{equation}  
Rigorous results have been proven for Standard GD in case the gradient $\nabla f$ is globally Lipschitz \cite{curry, crockett-chernoff, goldstein, armijo} and the learning rate $\delta _0$ is small enough ($<1/(2L)$), plus some additional assumptions, see summary in Section \ref{SectionComparison} below. (Moreover, if $f$ is a $C^2$ function then the rate of convergence can be obtained in terms of the local norm of the Hessian $\nabla ^2f$ of $f$.) These assumptions are basically the best under which convergence of Standard GD can be proven, see Examples \ref{LemmaHolderContinuity} and \ref{Example1} below. 

For ease of later reference, we denote by $C^{1,1}_L$ the set of $C^1$ functions $f$ whose gradient is globally Lipschitz with Lipschitz constant $L$, that is $||\nabla f(x)-\nabla f(y)||\leq L||x-y||$ for all $x,y\in \mathbb{R}^m$. It is worthy to remark that the class $C^{1,1}_L$ is not preserved under small perturbations. For example, as mentioned in Example \ref{Example7} below, there is a common technique in DNN called L2 regularisation (or compensation), designed to prevent overfitting, which for a function $f$ considers perturbations of the form $\hat{f}(x)=f(x)+\epsilon ||x||^2$ for some $\epsilon >0$. However, it can be easily checked that if $f\in C^{1,1}_L$, then $\hat{f}$ is not in $C^{1,1}_{L'}$ for every $L'$! More strongly, for all, except  at most one value of $\epsilon \in \mathbb{R}$, the regularisation $\hat{f}$ is not in $C^{1,1}_{L'}$ for every $L'$.

There are many variants of GD (including one by Wolfe \cite{wolfe}) - from most basic to very sophisticated ones, each with a specific range of problems for which it works better than another - have been proposed and studied (for an overview see \cite{ruder}). These will be described in some more detail, in comparison to Backtracking GD, in Section \ref{SectionComparison}  below. At this point, it suffices to mention that as far as we know, all major convergence results in previous work require quite strong assumptions, which are not preserved under small perturbations, such as functions being in $C^{1,1}_L$ plus additional assumptions. In particular, the assumptions under which those results need are not valid for the common L2 regularisation, designed to prevent overfitting. In contrast, our results are valid for most functions, in particular for all Morse functions  (for precise definition, see Section 2) - which are preserved under small perturbations.  

Before going further, we provide a brief description of how Deep Learning is employed in practice. When given a problem (for example, classifying hand written digits), a researcher must choose first of all  (based on their knowledge and experience) an appropriate model to solve it. This is due to the fact that there is no universal model which works uniformly well for all problems ("No-free-lunch-theorem", see \cite{wolpert-mcready}  and a variant in Section 5.1 in \cite{shwartz-david}). If the researcher has chosen a DNN as the model, correspondingly he/she chooses an architecture for his/her model. In a nutshell, this gives rise to a real function $f(x,y,\kappa )$ (which is a composition of simpler real functions, each coming from a layer in the neural network), where $x$ represents one element in the training set, $y$ is  the correct response, and $\kappa $ is a set of parameters which the researcher will later want to optimise upon. For example, in classifying hand written digits, a value for $x$ will be the pixel values stored in the computer for a picture of a hand written number, $y\in J=\{0,1,\ldots ,9\}$ is the correct label for that picture. Under the assumption of conditional independence of the observations, we arrive at the optimisation problem which needs to be solved:  $\hat{\kappa}=argmin _{\kappa}[F(\kappa )=\frac{1}{\sharp I}\sum _{i\in I}f(x_i,y_i,\kappa )]$, where $F$ is the cost (or objective) function and $\sharp A=$ the cardinality of $A$. Closed form solutions for this problem is not feasible for DNN, hence one needs to use a numerical optimisation method, which may introduce yet new parameters usually called  hyper- (or tuning-) parameters. For example, in the standard gradient descent method,  learning rate is a hyper-parameter. In practice, usually one can only work with a small chunk (a mini-batch $I_h$, see the next paragraph for explanation) of the training set at a time. This gives rise to a partial sum $F_h(\kappa  )=\frac{1}{\sharp I_h}\sum _{i\in I_h}f(x_i,y_i,\kappa )$. Hence, one actually work with a sequence of unbiased estimates of the objective $F(\kappa )$, namely $F_h(\kappa )$. The intuition is that the gradient of $F_h$ is typically an unbiased estimator of the gradient of $F$. The trained parameters $\hat{\kappa}$ are used by the DNN to provide predictions when a new datum $x$ is given.  

In practice, we will shuffle randomly the training set and partition it into mini-batches, and when finish working with all mini-batches in one such partition we say that we complete an epoch. To train a DNN, usually one needs to run about tens to hundreds of epochs. Mini-batch training is now main stream in the Deep Learning community, and the reason for using mini-batch comes from the limitation of resources (especially RAM for GPUs) and/or time reduction to compute gradients for each iteration. Computing gradients for full batch takes much more time in comparison to mini-batch, the cost is nearly linear. Moreover, mini-batch can help to escape saddle points, as mentioned in Section 2. In practical deep learning, small mini-batching can cause noise and instability due to covariance between batches (\cite{Jastrzebski}), and the common mini-batch sizes are between $50$ and $256$ \cite{ruder}. We note that our new methods in this paper work stably for many batch sizes, from big batch sizes like $50$ and $256$ as above to small batch sizes such as $10$, see Subsection \ref{Experiment2}.  

The inexact version of GD, which is more suitable to realistic applications such as DNN, is as follows: 
\begin{equation}
z_{n+1}=z_n-\delta _nv_n,
\label{EquationGDInexact}\end{equation}
where we assume that $v_n$ is not too far from the gradient direction $\nabla f(z_n)$. There are many ways to specify the condition of "not too far" (see e.g. \cite{bottou-etal, bertsekas}), here in this paper we use the following common version: there are $A_1,A_2>0$ and $1\geq \mu >0$ such that for all $n$
\begin{eqnarray}
A_1||\nabla f(z_n)||\leq ||v_n||&\leq&A_2||\nabla f(z_n)||,\label{EquationClosenessCondition1} \\
<\nabla f(z_n),v_n>&\geq& \mu ||\nabla f(z_n)||\times ||v_n||.  \label{EquationClosenessCondition2}
\end{eqnarray}
When $\mu=1$ we have that $v_n$ is parallel to $\nabla f(z_n)$ for all $n$, and thus recover the general scheme for GD in (\ref{EquationGradientDescentGeneral}). The geometric meaning of (\ref{EquationClosenessCondition2}) is that the cosine of the angles between $\nabla f(z_n)$ and $v_n$ are positive and bounded from $0$, uniformly in $n$.  

For justification of GD in Deep Learning, a usually cited result is Stochastic GD, which goes back to the work by Robbins and Monro (\cite{robbins-monro, bottou-etal}). However, as will be seen from the analysis in Section \ref{SectionComparison}, at current there is a considerable gap between results which can be proven in the deterministic case (for a single cost function) and in the stochastic case (for example, for the common practice of using mini-batches), which will need to be thoroughly addressed if one wants to have a firm theoretical foundation to justify the use of GD in Deep Learning (in particular, in the rise of adversarial images, sounds and texts \cite{papernot-etal, eykholt-etal}). 

The research in this field is very extensive, and by no means the references in this paper fully represent the state-of-the-art.  For some surveys and implementations of these methods, see e.g. \cite{bottou-etal, ruder, bertsekas, nielsen, boyd-vandenberghe}.  

The focus in this paper is the variant of GD which is called Backtracking GD (and also Backtracking Momentum and Backtracking Nesterov accelerated gradient), because it is related to the backtracking line search method (see Section 2 for precise definition). We will show that this method behaves very nicely, both theoretically and experimentally. In particular, in contrast to Standard GD, convergence can now be proven for Backtracking GD for most functions (including all Morse functions). This expands the class of cost functions for which GD is guaranteed to work, and hence the models, problems and questions which can be dealt with, which is beneficial in particular in view of the No-free-lunch theorem mentioned above. Also, it can be implemented very efficiently in DNN, and experiments on the CIFAR10 and CIFAR100 dataset with various popular architectures shows that the performance of our new methods, while avoiding manually fine-tuning of learning rates, is better than state-of-the-art algorithms such as MMT (Momentum), NAG (Nesterov accelerated gradient),  Adam, Adagrad...  

This paper is organised as follows. In the next section, we will state our main theoretical results and generalisations (including an inexact version of Backtracking GD) with applications to backtracking versions for MMT and NAG, a heuristic argument for the effectiveness of Standard GD and a modification of Backtracking GD (Two-way Backtracking GD) and a brief summary of main experimental results, together with comparisons to previous work in the literature. Proofs of main theoretical results are presented in Section 3, and details of experimental results are presented in Section 4. In the last section, we present conclusions and some open questions. 

{\bf Source codes.} Accompanying source codes for the experiments in Section 4 are available at the following GitHub link \cite{mbtoptimizer}.

{\bf Acknowledgments.} We are grateful to Geir Dahl, who pointed out relevant references, including \cite{bertsekas}. T. T. Truong would like to thank the organisers and participants of the Mathematics in Industry Study Group meeting in 2016 at University of South Australia, in particular the DST group, for exposing him to various real life applications of numerical methods and industrial mathematics (including the gradient descent method). He also appreciates very much the many discussions with Neeraj Kashyap (Google). He thanks also Terje Kvernes and the IT support  team (Department of Mathematics, University of Oslo) for helping with technical issues. 

\section{Statement of the results and comparison to previous work}

In this section, we will first introduce our main theoretical results and generalisations (including an inexact version of Backtracking GD) with applications to backtracking versions of MMT and NAG, and experimental results, including a heuristic argument for the effectiveness of Standard GD and some modifications of Backtracking GD.  Then we compare our results to previous work. 

\subsection{Main theoretical results} We recall first the so-called Armijo's condition, for some $0<\alpha < 1$ and some $x,y\in \mathbb{R}^m$ (the main case of interest is when $y=x-\sigma \nabla f(x)$):
\begin{equation}
f(y)-f(x)\leq \alpha <\nabla f(x),y-x>, 
\label{EquationArmijoCondition}\end{equation}
here $<.,.>$ is the standard inner product in $\mathbb{R}^m$. Given as above $f:\mathbb{R}^m\rightarrow \mathbb{R}$ a $C^1$-function, $\delta _0>0$ and $1>\alpha ,\beta >0$ arbitrary numbers,  we define the function $\delta (f,\delta _0,x):\mathbb{R}^m\rightarrow \mathbb{R}$ to be the largest number $\sigma $ among $\{\beta ^n\delta _0:~n=0,1,2,\ldots \}$ for which 
\begin{equation}
f(x-\sigma \nabla f(x))-f(x)\leq -{\alpha \sigma }||\nabla f(x)||^2.
\label{Equation2}\end{equation} 
By Lemma \ref{Lemma2} below, the function $\delta (f,\delta _0,x)$ is well-defined and is always positive. Moreover, this positivity is uniform on compact subsets of $\mathbb{R}^m$ if $\nabla f$ is locally Lipschitz (\cite{curry, crockett-chernoff, goldstein, armijo}). For a given $z_0\in \mathbb{R}^m$, we define the sequence
\begin{equation}
z_n=z_{n-1}-\delta (f,\delta _0,z_{n-1})\nabla f(z_{n-1}),
\label{Equation3}\end{equation}
for $n=1,2,3\ldots $. 
This update rule is usually called Backtracking GD or Armijo's rule in the literature. 

We recall that a point $z^*$ is a cluster point of a sequence $\{z_n\}$ if there is a subsequence $\{z_{n_k}\}$ so that $\lim _{k\rightarrow\infty}z_{n_k}=z^*$. The sequence $\{z_n\}$ converges if and only if it has one and only one cluster point. It has been known that any cluster point of the sequence $\{z_n\}$ in the Backtracking GD is a critical point of $f$, see e.g. Proposition 1.2.1 in \cite{bertsekas}. (We remark that  \cite{bertsekas} uses the terminology "limit points" instead of  the more common one "cluster points", which may cause some confusion occasionaly.) However, in previous work, starting from \cite{armijo}, convergence for Backtracking GD is proven only under strong assumptions similar to those in Theorem \ref{TheoremArmijoGeneralisation} below, in particular it is required that the function belongs to $C^{1,1}_L$. Our main theoretical result of this paper, stated next, extends this to a much more broad and realistic class of cost functions. 

\begin{theorem} Let $f$ be a $C^1$ function and let $\{z_n\}_{n=0,1,2,\ldots }$ be defined as in (\ref{Equation3}). 

1) Either $\lim _{n\rightarrow\infty}||z_n||=\infty$ or $\lim _{n\rightarrow\infty}||z_{n+1}-z_n||=0$. 

2) Assume that $f$ has at most countably many critical points. Then either\\ $\lim _{n\rightarrow\infty}||z_n||$ $=\infty$ or $\{z_n\}$ converges to a critical point of $f$.  

3) More generally, assume that all connected components of the set of critical points of $f$ are compact. Then either $\lim _{n\rightarrow\infty}||z_n||=\infty$ or $\{z_n\}$ is bounded. Moreover, in the latter case the set of cluster points of $\{z_n\}$ is connected. 
\label{Theorem1}\end{theorem}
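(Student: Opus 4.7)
The plan is to deduce (1) directly from Armijo's descent, then obtain (3) by combining (1) with the already-cited fact (Proposition 1.2.1 of Bertsekas) that every cluster point of a Backtracking-GD sequence is a critical point of $f$, together with a topological neighborhood lemma; and finally observe that (2) is an immediate corollary of (3).

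For (1), Armijo's inequality guarantees $f(z_n)-f(z_{n+1}) \geq \alpha\,\delta_n\|\nabla f(z_n)\|^2$, so $\{f(z_n)\}$ is monotonically decreasing. If we are not in the escape case $\|z_n\|\to\infty$, some subsequence $\{z_{n_k}\}$ is bounded, hence $f(z_{n_k})$ is bounded below by continuity of $f$, forcing $\{f(z_n)\}$ to converge. A telescoping sum then yields $\sum_n \delta_n\|\nabla f(z_n)\|^2 < \infty$, so in particular $\delta_n\|\nabla f(z_n)\|^2\to 0$. Since the backtracking construction guarantees $\delta_n\leq \delta_0$,
\[
\|z_{n+1}-z_n\|^2 \;=\; \delta_n^{2}\|\nabla f(z_n)\|^2 \;\leq\; \delta_0\cdot\delta_n\|\nabla f(z_n)\|^2 \;\longrightarrow\; 0,
\]
which is (1).

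For (3), write $\mathcal{C}$ for the critical set of $f$ and assume $\|z_n\|\not\to\infty$; fix a cluster point $z^*\in\mathcal{C}$ and let $C$ be its connected component, compact by hypothesis. The key topological input I would establish is that a compact connected component of a closed subset of $\mathbb{R}^m$ admits a basis of bounded open neighborhoods $U$ with $\partial U\cap\mathcal{C}=\emptyset$; this reduces to the standard fact that in the compact Hausdorff space $\mathcal{C}\cap\overline{B(C,R)}$ connected components coincide with quasi-components, so $C$ is an intersection of clopen sets and a finite sub-intersection yields a clopen separator of $C$ from the rest of the truncated critical set, which can then be thickened in $\mathbb{R}^m$ using the positive distance between the separator and its complement. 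Fix such a $U$; by continuity and compactness of $\partial U$ we have $\inf_{x\in\partial U}\|\nabla f(x)\|\geq\eta>0$. Now suppose, for contradiction, $\{z_n\}$ were unbounded: since $z^*\in U$, the sequence enters $U$ infinitely often and, being unbounded, also leaves $U$ infinitely often, so there are infinitely many indices $n_k$ with $z_{n_k}\in U$ and $z_{n_k+1}\notin U$. For each such $k$, $d(z_{n_k},\partial U)\leq \|z_{n_k+1}-z_{n_k}\|\to 0$ by part (1), and compactness of $\partial U$ extracts a further subsequence converging to some $p\in \partial U$. Then $p$ is a cluster point of $\{z_n\}$, hence critical, contradicting $\partial U\cap\mathcal{C}=\emptyset$. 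Therefore $\{z_n\}$ is bounded, and connectedness of its cluster set is the classical consequence of $\|z_{n+1}-z_n\|\to 0$ together with boundedness: were the (compact) cluster set to split into two non-empty, disjoint closed pieces $A,B$ at positive distance, iterates shuttling between small neighborhoods of $A$ and of $B$ would, by the vanishing of $\|z_{n+1}-z_n\|$, have to accumulate at a point in the gap, contradicting the splitting.

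Part (2) then follows immediately: a countable subset of $\mathbb{R}^m$ has only singletons as its connected components, each trivially compact, so (3) applies; the cluster set is a connected subset of the countable critical set, hence a single point, and $\{z_n\}$ converges to it. The step I expect to be the main obstacle is the careful verification of the topological neighborhood lemma used in (3) — separating a compact component of a general closed set from the rest while keeping control of the boundary — whereas the other parts are a fairly direct chain of implications from Armijo's descent.
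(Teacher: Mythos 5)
Your proof is correct, but it takes a genuinely different route from the paper's. For part (1) the paper fixes $\epsilon>0$ and splits the indices according to whether $\delta_n\|\nabla f(z_n)\|\leq\epsilon$, bounding $\|z_{n+1}-z_n\|$ separately on the two pieces; your observation that $\|z_{n+1}-z_n\|^2\leq\delta_0\cdot\delta_n\|\nabla f(z_n)\|^2$ combined with the summability of $\delta_n\|\nabla f(z_n)\|^2$ reaches the same conclusion more quickly. The substantial divergence is in parts (2)--(3): the paper compactifies $\mathbb{R}^m$ inside the real projective space $\mathbb{P}\mathbb{R}^m$, proves the metric comparison $d(x,y)\leq C\|x-y\|$ of Lemma \ref{Lemma3}, and applies the result of \cite{asic-adamovic} to get connectedness of the cluster set in the compactification, from which the dichotomy is read off; it proves (2) directly this way and declares (3) ``similar''. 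You instead stay in $\mathbb{R}^m$: you isolate the compact component of the critical set by a bounded open set whose boundary misses the critical set (your quasi-component argument is the standard one and does work), use the exit-point argument together with the fact that every cluster point is critical to force boundedness, then invoke the classical connectedness of the cluster set of a bounded sequence with vanishing steps, and finally obtain (2) as a corollary of (3). Your route is more elementary---no compactification and no metric comparison lemma---and your deduction of (2) from (3) avoids the slightly delicate ``elementary point set topology'' step the paper needs about a connected subset of $\mathbb{P}\mathbb{R}^m$ meeting $\mathbb{R}^m$ in a countable set; what the compactification buys in exchange is a single mechanism treating finite cluster points and cluster points at infinity uniformly, with no need for your separation lemma. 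Both arguments rest on the same external input, namely that every cluster point of the backtracking sequence is a critical point.
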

A generalisation of Theorem \ref{Theorem1}, where we allow the vector $v_n$ in $\delta _nv_n=z_{n+1}-z_n$ to not be parallel with $\nabla f(z_n)$ but only satisfies conditions (\ref{EquationClosenessCondition1}) and (\ref{EquationClosenessCondition2}), which is more relevant to the mini-batch practice, will be provided in Theorem \ref{TheoremSequenceVersion}. The proofs of the two theorems are the same. However, since the statement of Theorem \ref{TheoremSequenceVersion}  is much more complicated than that of Theorem \ref{Theorem1}, we choose to state Theorem \ref{Theorem1} first for easy understanding. 
\begin{remark}

Note that the proof of Theorem \ref{Theorem1} needs only that $\delta (f,\delta _0,z)$ $\leq$  $\delta _0$ for all $z$, that any cluster point of $\{z_n\}$  is a critical point of $f$, and that Armijo's condition is satisfied (and not the specific definition of $\delta (f,\delta _0,z)$). Hence, the conclusion of the theorem holds in more general settings, for example under which Wolfe's conditions provide sequences $\{z_n\}$ for which $\{\nabla f(z_n)\}$ converges to $0$ - for detail see Subsection \ref{SectionComparison}. This will be used later to provide modifications of Theorem \ref{Theorem1}  which are more suitable to realistic applications, including backtracking versions of MMT and NAG. 

In the theorem, note that a general sequence $\{z_n\}$ may have both a convergent subsequence and another subsequence diverging to infinity. Hence, it can be regarded as a miracle that Backtracking GD guarantees the conclusions of the theorem. For example, for Standard GD, all these conclusions fail in general, see Subsection \ref{SectionComparison} for detail.  

If $f$ is a function without critical points, for example $f(t)=e^t$, then the sequence $\{z_n\}$ cannot have any cluster point inside $\mathbb{R}^n$. Therefore, it must diverge to infinity. The same argument applies  more generally to  $f$ and $z_0$ for which the set $\{z\in \mathbb{R}^m:~f(z)\leq f(z_0)\}$ contains no critical points of $f$. See \cite{bertsekas} for more detail. 

The assumption in part 2 of Theorem \ref{Theorem1} is satisfied by all Morse functions. We recall that a $C^2$ function $f$ is Morse if all of its critical points are non-degenerate. This condition means that whenever $x^*\in \mathbb{R}^m$ is a  critical point of $f$, then the Hessian matrix $\nabla ^2f(x^*)=(\partial ^2f/\partial x_i\partial x_j)_{i,j=1,\ldots ,m}(x^*)$ is invertible. All critical points of a Morse function are isolated, and hence there are at most countably many of them. Moreover, note that Morse functions are dense. In fact, given any $C^2$ function $g$, the function $f(x)=g(x)+<a,x>$ is Morse for $a$ outside a set of Lebesugue's measure $0$, by Sard's lemma. Hence Morse functions are dense in the class of all functions. For example, $g(x)=x^3$ is not a Morse function, but $f(x)=g(x)+ax$ is Morse for all $a\not= 0$. More stronger, by using transversality results, it can be shown that the set of all Morse functions is preserved under small perturbations.  

In case $f$ is real analytic, then without the assumption that $f$ has at most countably many critical points,  \cite{absil-mahony-andrews} also showed that the sequence $\{z_n\}$ either diverges to infinity or converges. However, the real analytic assumption is quite restrictive.   

In practice, knowing that the sequence in (\ref{Equation3}) converges, and that the limit point is moreover a local minimum point, is usually good enough. In fact, for linear networks, it is known that every local minimum is a global minimum \cite{kawaguchi}. In contrast to conventional wisdom derived from low dimensional intuition, a working hypothesis in Deep Learning is that local minima with high error (that is, a minimum point at which the value of the function is too much bigger than the optimal value) are exponentially rare in higher dimensions \cite{dauphin-pascanu-gulcehre-cho-ganguli-bengjo, swirszcz-czarnecki-pascanu}, and hence finding local minima is enough. Perturbation can help to escape from saddle points  \cite{ge-huang-jin-yuan}, and hence we expect that backtracking gradient descent for random mini-batches will also help to escape saddle points in training neural networks.  See also Theorem \ref{Theorem2} below.   
\label{ExampleBestPossible}\end{remark}

In practical applications, we would like the sequence $\{z_n\}$ to converge to a minimum point.  It has been shown in \cite{dauphin-pascanu-gulcehre-cho-ganguli-bengjo} via experiments that for cost functions appearing in DNN the ratio between minima and other types of critical points becomes exponentially small when the dimension $m$ increases, which illustrates a theoretical result for generic functions  \cite{bray-dean}. Which leads to the question: Would in most cases GD  converge to a minimum? We will see that because it is indeed a {\bf descent} method, Backtracking GD answers the above question in affirmative in a certain sense. Before stating the precise result, we formalise the notion of "a critical point which is not a minimum". 

{\bf Generalised saddle point.} Let $f$ be a $C^1$ function and let $z_{\infty}$ be a critical point of $f$. Assume that $f$ is $C^2$ near $z_{\infty}$. We say  that $z_{\infty}$ is a generalised saddle point of $f$ if the Hessian $\nabla ^2f(z_{\infty})$   has at least one {\bf negative} eigenvalue. 

If $z_{\infty}$ is a non-degenerate critical point of $f$, then  it is  a minimum if and only if all eigenvalues of the Hessian $\nabla ^2f(z_{\infty})$ are positive. Hence in this case we see that a critical point of $f$ is a minimum if and only if it is not a generalised saddle point.  If $U\subset \mathbb{R}^m$ is an open set, we denote by $Vol(U)$ its Lebesgue measure. Also, if $\epsilon >0$ and $z_{\infty}\in \mathbb{R}^m$, we denote by $B(z_{\infty}, \epsilon )$ $=$ $\{x\in \mathbb{R}^m: ~||x-z_{\infty}||<\epsilon \}$ the open ball of radius $\epsilon $ and centre $z_{\infty}$. For any critical point $z_{\infty}$ of $f$, we define:

$\mathcal{D}(z_{\infty})$ $=$ $\{z_0\in \mathbb{R}^m:$ the sequence $\{z_n\}$ in (\ref{Equation3}) {\bf does not contain} any subsequence converging to $z_{\infty}\}$. 

\begin{theorem} Let $f$ be a $C^1$ function. Assume that $z_{\infty}$ is a saddle point of $f$. For every $\epsilon >0$, there is a non-empty open set $U(z_{\infty},\epsilon )\subset \mathcal{D}(z_{\infty})\cap B(z_{\infty}, \epsilon )$. Moreover, we have the following density 1 property:
\begin{eqnarray*}
\lim _{\epsilon\rightarrow 0}\frac{Vol(U(z_{\infty},\epsilon ))}{Vol(B(z_{\infty}, \epsilon ))}=1. 
\end{eqnarray*}

\label{Theorem2}\end{theorem}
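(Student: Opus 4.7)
The plan is to exploit the Armijo descent property together with the expanding linearization at the saddle. Let $H=\nabla^2 f(z_\infty)$, and choose by the generalised saddle hypothesis a unit eigenvector $v$ with $Hv=-\lambda v$, $\lambda>0$. Taylor's formula gives $f(z_\infty+tv)-f(z_\infty)=-\tfrac{\lambda}{2}t^2+o(t^2)<0$ for all small nonzero $t$, so for every $\epsilon>0$ the set $V_\epsilon:=B(z_\infty,\epsilon)\cap\{f<f(z_\infty)\}$ is open and nonempty. For any $z_0\in V_\epsilon$, Armijo's inequality (\ref{Equation2}) forces $f(z_n)\le f(z_0)<f(z_\infty)$ for every $n$, and continuity of $f$ then rules out any subsequence of $\{z_n\}$ converging to $z_\infty$. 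Hence $V_\epsilon\subset\mathcal{D}(z_\infty)$, which already establishes the nonempty open subset required by the first part of the theorem.

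For the density-$1$ claim, $V_\epsilon$ alone will not suffice: its volume fraction inside $B(z_\infty,\epsilon)$ is asymptotically controlled by the quadratic form $\langle Hw,w\rangle$ and is bounded away from $1$ whenever $H$ has a nonnegative eigenvalue. The plan is to enlarge $V_\epsilon$ by pullback along the iteration and define
\begin{equation*}
U(z_\infty,\epsilon):=\operatorname{int}\bigl\{z_0\in B(z_\infty,\epsilon):f(z_n)<f(z_\infty)\text{ for some }n\ge 0\bigr\}.
\end{equation*}
By descent this set lies in $\mathcal{D}(z_\infty)$, it is open by construction, and it contains $V_\epsilon$. The substantive task is to show that $B(z_\infty,\epsilon)\setminus U(z_\infty,\epsilon)$ has volume $o(\epsilon^m)$. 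Since $f\in C^2$ near $z_\infty$, its gradient is locally Lipschitz, and the uniformity part of Lemma~\ref{Lemma2} yields a lower bound $\delta_n\ge\delta_*>0$ as long as $z_n$ stays in a small fixed neighborhood $N$ of $z_\infty$. Writing $w_n=z_n-z_\infty$ and linearizing gives
\begin{equation*}
w_{n+1}=(I-\delta_n H)w_n+O(\|w_n\|^2),
\end{equation*}
where $I-\delta_n H$ has an eigenvalue of modulus at least $1+\delta_*\lambda>1$ along the negative eigenspace $V$ of $H$. A standard invariant-cone (Hadamard--Perron) argument then identifies the set of $z_0\in N$ whose forward orbit remains in $N$ with a Lipschitz graph over $V^{\perp}$ of codimension $\dim V\ge 1$, and so Lebesgue-null. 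Orbits that exit $N$ must do so essentially along $V$, where $\tfrac12\langle Hw,w\rangle<0$ forces $f<f(z_\infty)$ at the exit; those $z_0$ therefore belong to the defining set of $U(z_\infty,\epsilon)$. Combining these contributions bounds the complement by a measure-zero set plus a set of vanishing relative volume, yielding the required density.

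The main obstacle is that $\delta(f,\delta_0,z)$ only takes values in the discrete set $\{\beta^j\delta_0\}_{j\ge 0}$, so the map $z\mapsto z-\delta(f,\delta_0,z)\nabla f(z)$ is merely piecewise smooth, with jumps along the boundaries where the smallest accepted $j$ changes. Consequently the classical $C^1$ stable manifold theorem is not directly available, and the Hadamard--Perron step must either be run stratum by stratum on the open pieces where $\delta$ is locally constant (and then recombined across the measure-zero jump locus), or replaced outright by a direct expansion estimate along $V$ that tolerates the discontinuity. A secondary difficulty is that $H$ may have zero eigenvalues, forcing a center-stable rather than stable manifold variant of the cone argument; the single guaranteed negative eigenvalue still supplies the expansion needed to cut out a full codimension, but the graph transform has to be set up carefully to absorb the neutral directions.
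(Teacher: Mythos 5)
Your first assertion is proved correctly: the sublevel set $V_\epsilon=B(z_\infty,\epsilon)\cap\{f<f(z_\infty)\}$ is open, non-empty by the negative eigendirection, and Armijo descent (\ref{Equation2}) traps the orbit below the level $f(z_\infty)$, so continuity of $f$ gives $V_\epsilon\subset\mathcal{D}(z_\infty)$. You are also right that $V_\epsilon$ has relative volume bounded away from $1$ in general, so the real content of the theorem is the density claim --- and that is exactly where your argument has a genuine gap. The paper does not track orbits at all. It makes a one-step computation: writing $H=\nabla^2 f(z_\infty)$ in coordinates diagonalising $H$ (Sylvester's law of inertia) and Taylor-expanding both $\nabla f$ and $f$ at $z_\infty$, it shows that the first backtracking iterate already satisfies $f(z_1)=-4\|P_-(z_0-z_\infty)\|^2+\gamma_2(z_0)\|z_0-z_\infty\|^2$ with $\gamma_2(z)\to 0$ as $z\to z_\infty$, where $P_-$ is the projection onto the negative eigenspace of $H$; hence $f(z_1)<f(z_\infty)$ on the cone $U(z_\infty,\epsilon)=\{z:4\|P_-(z-z_\infty)\|^2>\gamma_\epsilon\|z-z_\infty\|^2\}$, whose complement in the ball is a thin conical neighbourhood of the proper subspace $\ker P_-$ and so has relative volume tending to $0$ as $\gamma_\epsilon\to 0$. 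Monotonic descent then finishes, exactly as in your first paragraph but applied after one iteration instead of zero.

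Your proposed route for the density statement --- an invariant-cone (Hadamard--Perron) argument showing that the set of initial points whose orbit never leaves a neighbourhood of $z_\infty$ is a Lipschitz graph of positive codimension --- cannot be carried out with the tools at hand, for the reason you yourself flag: the update map $z\mapsto z-\delta(f,\delta_0,z)\nabla f(z)$ is genuinely discontinuous (Example \ref{Example2} of the paper exhibits this explicitly), so no stable or center-stable manifold theorem applies, and running the graph transform ``stratum by stratum'' on the pieces where $\delta$ is locally constant is not a construction but a wish, since the orbit may wander between strata in a way you do not control. The step ``orbits that exit $N$ must do so essentially along $V$, where $f<f(z_\infty)$ at the exit'' is likewise unquantified and does not follow from anything you establish. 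In fact the full-measure statement you are aiming at is essentially the paper's Conjecture \ref{Conjecture1}, which is left open precisely because of this obstruction. As written, your argument proves only the first sentence of the theorem; the missing idea is to replace orbit-tracking by the explicit second-order expansion of $f$ after a single backtracking step.
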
 

\begin{remark}

1) It is known that if the starting point $z_0$ of Backtracking GD is close enough to an isolated local minimum point, then the sequence $\{z_n\}$ will converge to that local minimum point. See e.g.  Proposition 1.2.5 (Capture Theorem) in \cite{bertsekas}.   

2) Theorem \ref{Theorem2} asserts roughly that if $\lim _{n\rightarrow\infty}z_n=z_{\infty}$, then it is very rare that $z_{\infty}$ is a saddle point.  In the proof of Theorem \ref{Theorem2}, the open sets $U(z_{\infty}, \epsilon )$ are chosen to be cones. 

3) It is observed that in practical problems in artificial neural networks, the cost function $f$ has approximately the same value at all local minimum points. Hence, finding any local minimum is usually good enough for practical purposes. Moreover (\cite{dauphin-pascanu-gulcehre-cho-ganguli-bengjo}), for higher-dimensional functions we expect that saddle points are prevalent, and it is very difficult to escape saddle points. Hence, having a density 1 property as in Theorem \ref{Theorem2} is good toward this concern. 

 \label{Remark2}\end{remark}

\subsection{Some generalisations of Theorem \ref{Theorem1}}

The following is a modification of Theorem \ref{Theorem1} which is more suitable to realistic applications. 
\begin{theorem} Let $f:\mathbb{R}^m\rightarrow \mathbb{R}$ be a $C^1$ function which has at most countably many critical points. Let $z_0$ be a point in $\mathbb{R}^m$, $0<\alpha ,\beta  <1$ and $\delta _0>0$.

1) Assume that $\nabla f$ is locally Lipschitz near every critical points of $f$. Let $\{z_n\}_{n\geq 0 }$ be the sequence constructed from Backtracking GD update rule (\ref{Equation3}). Then either $\lim _{n\rightarrow\infty}||z_n||=\infty$ or $\inf _{n\geq 0 }\delta (f,\delta _0,z_n)>0$. 

2) Conversely, let $\{\delta _n\}_{n\geq 0 }$ be a sequence of real numbers in $(0,\delta _0)$ so that $\inf _{n}\delta _n$ $>0$. Define the sequence $\{z_n\}$ by the update rule: $z_n=z_{n-1}-\delta _{n-1}\nabla f(z_{n-1})$ for $n\geq 1$. Assume moreover that for every $n\geq 1$ we have $f(z_n)-f(z_{n-1})\leq -\alpha \delta _{n-1}||\nabla f(z_{n-1})||^2$. Then either $\lim _{n\rightarrow\infty}||z_n||=\infty$ or $\{z_n\}$ converges to a critical point of $f$.   

\label{TheoremBoundednessLearningRates}\end{theorem}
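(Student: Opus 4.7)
The plan for Part 1 is to combine Theorem \ref{Theorem1}(2) with a standard descent-lemma argument. Assuming $\lim_n ||z_n|| \neq \infty$, Theorem \ref{Theorem1}(2) (applicable since $f$ has at most countably many critical points) already gives that $\{z_n\}$ converges to some critical point $z_\infty$. By the local-Lipschitz hypothesis, $\nabla f$ is $L$-Lipschitz on some open ball $B(z_\infty, r)$. A routine descent-lemma calculation along the segment from $z_n$ to $z_n - \sigma \nabla f(z_n)$ yields
\[ f(z_n - \sigma \nabla f(z_n)) - f(z_n) \leq -\sigma \left(1 - \frac{L\sigma}{2}\right) ||\nabla f(z_n)||^2, \]
provided that segment lies inside $B(z_\infty, r)$. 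This forces Armijo's condition whenever $\sigma \leq 2(1-\alpha)/L$. Since $z_n \to z_\infty$ and $\nabla f(z_n) \to 0$, for every sufficiently large $n$ the whole segment stays in $B(z_\infty, r)$ uniformly in $\sigma \in [0, \delta_0]$. Then the definition of $\delta(f, \delta_0, z_n)$ as the largest $\beta^k \delta_0$ satisfying Armijo forces $\delta(f, \delta_0, z_n) \geq \min(\delta_0, \beta \cdot 2(1-\alpha)/L)$ for all such $n$. Combining with the trivial positivity $\delta(f, \delta_0, z_n) > 0$ at each of the finitely many earlier indices (Lemma \ref{Lemma2}) yields $\inf_n \delta(f, \delta_0, z_n) > 0$.

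For Part 2, the plan is to invoke the general form of Theorem \ref{Theorem1} explicitly flagged in the remark following its statement: the proof of that theorem only uses that $\delta_n \leq \delta_0$, that Armijo-type descent holds, and that every cluster point of $\{z_n\}$ is a critical point of $f$. The first two are immediate from the hypothesis; only the third requires work. Suppose $\{z_n\}$ does not diverge to infinity, and extract a convergent subsequence $z_{n_k} \to z^*$. Continuity of $f$ together with monotonicity of $\{f(z_n)\}$ (from the descent hypothesis) imply that the full sequence $\{f(z_n)\}$ converges to $f(z^*)$. Telescoping the descent inequality then gives
\[ \alpha \sum_{n \geq 0} \delta_n ||\nabla f(z_n)||^2 \leq f(z_0) - f(z^*) < \infty, \]
and combined with $\inf_n \delta_n > 0$ this forces $||\nabla f(z_n)|| \to 0$. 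Hence every cluster point of $\{z_n\}$ is a critical point. The remainder of the proof of Theorem \ref{Theorem1}(2) now applies verbatim and delivers convergence of $\{z_n\}$ to a critical point of $f$.

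The main technical obstacle I anticipate is the bookkeeping in Part 1: one must guarantee that the entire segment $[z_n, z_n - \sigma \nabla f(z_n)]$ lies in the ball of Lipschitz continuity uniformly in $\sigma \in [0,\delta_0]$, not merely for the accepted step size. This is handled by using both $z_n \to z_\infty$ and $\nabla f(z_n) \to 0$, so that $\delta_0 ||\nabla f(z_n)||$ is eventually as small as we like. Part 2 is, by contrast, essentially a reformulation observation once one trusts the remark following Theorem \ref{Theorem1}; the telescoping bound together with the uniform lower bound on $\delta_n$ does all the real work.
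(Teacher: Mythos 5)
Your proposal is correct and follows essentially the same route as the paper: Part 1 derives convergence to a critical point from Theorem \ref{Theorem1}(2) and then gets a uniform lower bound on the backtracking step from the local Lipschitz property near that limit (you spell out the descent-lemma computation that the paper dismisses as ``well-known''), and Part 2 verifies that every cluster point is critical (your telescoping argument replaces the paper's citation of Proposition 1.2.1 in \cite{bertsekas}) before invoking the remark that the proof of Theorem \ref{Theorem1} only needs $\delta_n\leq\delta_0$, Armijo descent, and criticality of cluster points. The only presentational difference is that you bound $\delta(f,\delta_0,z_n)$ along the tail of the sequence rather than uniformly over a whole ball, which is equally valid.
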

\begin{proof}
1) Assume that there is a subsequence of $\{z_n\}$ which is bounded. Then from Theorem \ref{Theorem1} we have that the sequence $\{z_n\}$ converges to a critical point $z_{\infty}$ of $f$. By assumption on $f$, there is an open neighbourhood $B(z_{\infty},r)$ of $z_0$ on which $\nabla f$ is locally Lipschitz. Then it is well-known that $\inf _{z\in B(z_{\infty},r)}\delta (f,\delta _0,z)>0$. Choose $N$ be an integer such that $z_n\in B(z_{\infty},r)$ for all $n\geq N$. We then have
\begin{eqnarray*}
\inf _{n=0,1,\ldots }\delta (f,\delta _0,z_n)\geq \min \{\inf _{0\leq n\leq N}\delta (f,\delta _0,z_n),\inf _{z\in B(z_{\infty},r)}\delta (f,\delta _0,z) \}>0.
\end{eqnarray*}

2) It can be checked that under the assumption $\inf _{n=0,1,\ldots }\delta _n >0$, the proof of Proposition 1.2.1 in \cite{bertsekas} goes through and shows that any cluster point of the sequence $\{z_n\}$ must be a critical point of $f$. Then from Remark \ref{ExampleBestPossible} it follows that proofs of all parts of Theorem \ref{Theorem1} go through and give us the desired result.   
\end{proof}
\begin{remark}
Example \ref{LemmaHolderContinuity} shows that the assumption on $\nabla f$ being locally Lipschitz near critical points of $f$ is necessary for part 1 of Theorem \ref{TheoremBoundednessLearningRates} to hold. 

Also, the assumption $\inf _{n}\delta _n>0$ is needed in general for part 2 of Theorem \ref{TheoremBoundednessLearningRates} to hold. In fact, let $f(x)=x^2$ and choose any $z_0>0$. If we choose any sequence of positive numbers $\{\delta _n\}$ so that $\sum _{n=0}^{\infty}\delta _n$ is small enough (depending on $z_0,\alpha $), then the sequence $\{z_n\}$ defined by the update rule $z_n=z_{n-1}-\delta _{n-1}f'(z_{n-1})$ for $n\geq 1$ will satisfy $f(z_n)-f(z_{n-1})\leq -\alpha \delta _{n-1}|f'(z_{n-1})|^2$ for all $n\geq 1$ and $\lim _{n\rightarrow \infty}z_n=z_{\infty}$  exists, but this point $z_{\infty}$ is $>0$, and hence is not the unique critical point $0$ of $f$. 
\label{RemarkTheoremBoundednessLearningRates}\end{remark}

The following generalisation of Theorem \ref{Theorem1} is relevant to the practice of using mini-batches in DNN. It uses the inexact version of Backtracking GD which we now introduce. 

{\bf Inexact Backtracking GD.} Fix $A_1,A_2 , \delta _0>0$; $1\geq \mu >0$ and $1>\alpha , \beta>0$. We start with a point $z_0$ and define the sequence $\{z_n\}$ by the following procedure. At step $n$:

i) Choose a vector $v_n$ satisfying $A_1||\nabla f(z_n)||\leq ||v_n||\leq A_2||\nabla f(z_n)||$ and $<\nabla f(z_n),v_n>\geq \mu ||\nabla f(z_n)||\times ||v_n||$. (These are the same as conditions (\ref{EquationClosenessCondition1}) and (\ref{EquationClosenessCondition2}) for Inexact GD.)

ii) Choose $\delta _n$ to be the largest number $\sigma $ among $\{\delta _0, \delta _0\beta ,\delta _0\beta ^2,\ldots \}$ so that 
\begin{eqnarray*}
f(z_n-\sigma v_n)-f(z_n)\leq -\alpha \sigma <f(z_n),v_n>.
\end{eqnarray*}
(This is Armijo's condition (\ref{EquationArmijoCondition}).) 

iii) Define $z_{n+1}=z_n-\delta _nv_n$. 

As in Lemma \ref{Lemma2}, we can choose $\delta _n$ to be bounded on any compact set $K$ on which $\nabla f$ is nowhere zero. 

\begin{theorem}
Let $f:\mathbb{R}^m\rightarrow \mathbb{R}$ be a $C^1$ function and let $z_n$ be a sequence constructed from the Inexact Backtracking GD procedure.  

1) Either $\lim _{n\rightarrow\infty}||z_n||=\infty$ or $\lim _{n\rightarrow\infty}||z_{n+1}-z_n||=0$. 

2) Assume that $f$ has at most countably many critical points. Then either\\ $\lim _{n\rightarrow\infty}||z_n||$ $=\infty$ or $\{z_n\}$ converges to a critical point of $f$.  

3) More generally, assume that all connected components of the set of critical points of $f$ are compact. Then either $\lim _{n\rightarrow\infty}||z_n||=\infty$ or $\{z_n\}$ is bounded. Moreover, in the latter case the set of cluster points of $\{z_n\}$ is connected. 

\label{TheoremSequenceVersion}\end{theorem}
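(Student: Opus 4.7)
My plan is to follow the proof strategy of Theorem \ref{Theorem1} essentially verbatim, exploiting the observation made in Remark \ref{ExampleBestPossible}: that proof uses only three facts, namely that $\delta_n \leq \delta_0$ for all $n$, that Armijo's condition holds at every iterate, and that every cluster point of $\{z_n\}$ is a critical point of $f$. The first two facts are built into the Inexact Backtracking GD procedure by construction. The only non-trivial adaptation is therefore to verify the third fact in the inexact setting, where $v_n$ need not be parallel to $\nabla f(z_n)$.

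For the critical-point property, I would combine Armijo's condition with the closeness conditions (\ref{EquationClosenessCondition1}) and (\ref{EquationClosenessCondition2}) to obtain the uniform quadratic descent estimate
\begin{equation*}
f(z_{n+1}) - f(z_n) \leq -\alpha \delta_n <\nabla f(z_n), v_n> \leq -\alpha \mu A_1 \delta_n ||\nabla f(z_n)||^2.
\end{equation*}
Suppose $\{z_{n_k}\}$ is a subsequence converging to a point $z^*$. Since $f(z_n)$ is monotonically decreasing by the above and bounded below along this subsequence, $\lim_n f(z_n)$ exists and is finite; telescoping then yields $\sum_n \delta_n ||\nabla f(z_n)||^2 < \infty$, and in particular $\delta_n ||\nabla f(z_n)||^2 \to 0$. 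Following the template of Proposition 1.2.1 in \cite{bertsekas}, if $\nabla f(z^*) \neq 0$ then by continuity $||\nabla f||$ is bounded below on a neighbourhood of $z^*$. A standard line-search argument (as in Lemma \ref{Lemma2}), now applied with the uniform constants $A_1, A_2, \mu$ instead of $1$, shows that the backtracking step size $\delta_{n_k}$ is bounded below by a positive constant on that neighbourhood, contradicting $\delta_{n_k}||\nabla f(z_{n_k})||^2 \to 0$. Hence $\nabla f(z^*) = 0$.

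With this ingredient secured, the conclusions of Theorem \ref{Theorem1} transfer to the inexact setting. Part 1 uses $||z_{n+1}-z_n|| = \delta_n ||v_n|| \leq A_2 \delta_n ||\nabla f(z_n)||$ together with $\delta_n \leq \delta_0$ and the summability above, which gives $\delta_n^2 ||\nabla f(z_n)||^2 \leq \delta_0 \cdot \delta_n ||\nabla f(z_n)||^2 \to 0$, hence $||z_{n+1}-z_n||\to 0$. Part 2 then follows because a sequence with $||z_{n+1}-z_n||\to 0$ has a closed, connected cluster set, and since this set lies inside the countable set of critical points it must be a singleton. Part 3 uses the same connectedness principle together with an analysis ruling out drift between compact critical components.

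The main technical subtlety I expect is in part 3, namely proving boundedness of $\{z_n\}$ under the weak hypothesis that only connected components of the critical set are compact. One must combine the strictly decreasing behaviour of $f(z_n)$ with $||z_{n+1}-z_n||\to 0$ and the compactness of individual components to preclude the sequence escaping to infinity while still possessing a bounded subsequence. This is the heart of Theorem \ref{Theorem1}'s proof (deferred to Section 3), and I would invoke that argument directly here, the only difference being that the step is now $\delta_n v_n$ rather than $\delta_n \nabla f(z_n)$, which changes constants but not the topological reasoning.
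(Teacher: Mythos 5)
Your proposal matches the paper's own treatment: the paper proves Theorem \ref{TheoremSequenceVersion} simply by declaring its proof identical to that of Theorem \ref{Theorem1}, relying (exactly as you do) on the observation in Remark \ref{ExampleBestPossible} that only Armijo's condition, the bound $\delta_n\leq\delta_0$, and the criticality of cluster points are actually used. Your explicit check that conditions (\ref{EquationClosenessCondition1})--(\ref{EquationClosenessCondition2}) give the descent estimate $f(z_{n+1})-f(z_n)\leq -\alpha\mu A_1\delta_n\|\nabla f(z_n)\|^2$ and hence that cluster points remain critical supplies the one detail the paper leaves implicit, and your summability shortcut in part 1 is a harmless streamlining of the paper's $A(\epsilon)/B(\epsilon)$ splitting rather than a genuinely different route.
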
 

The proof of this theorem is identical to that of Theorem \ref{Theorem1}. Below are some other practical modifications. 

\begin{example} (Sequence of cost functions) Another generalisation of Theorem \ref{Theorem1} is as follows. Let $f_n$ converge uniformly on compact sets to $f$, so that $\nabla f_n$ also converges uniformly on compact sets to $\nabla f$. Consider the following sequence: $z_{n+1}=z_n-\delta _n\nabla f_n(z_n)$, where $\delta _n=\delta (f_n,\delta _0,z_n)$. Then all conclusions of Theorem \ref{Theorem1} are satisfied.   
\label{Example6}\end{example}

\begin{example} (Non-smooth functions) Analysing the proofs of the conclusions in Theorem \ref{Theorem1}, we can see that they do not need the assumption that $f(x)$ is $C^1$. For example, assuming only that $f$ is directionally differentiable and $\nabla f(x)$ is locally bounded is enough. Even we may consider more general notions of derivative and boundedness, such as Lebesgue's integral and derivative, and essential maximum and minimum. The function $f(x)=|x|$ is not differentiable at $0$, but we can check that the Fundamental Theorem of Calculus still applies by using Lebesgue's integral, and hence can check that Theorem \ref{Theorem1} is valid also for this function. 

Another direction to generalise Theorem \ref{Theorem1} is to consider functions on open subsets $\Omega$ of $\mathbb{R}^m$. In this case, we have basically the same conclusion, if we either stop the iteration or choose a disturbance $v_n$ of $\nabla f(z_n)$ so that $z_n-\delta _nv_n\in \Omega $ whenever the point $z_n$ is on the boundary of $\Omega$. Theorem \ref{TheoremSequenceVersion} can be used to justify for this algorithm.  
\label{Example5}\end{example}

\begin{example} (Regularisation) There is a common practice of using regularisation in machine learning, that is we consider instead of a cost function $f$, the compensated version $g(x)=f(x)+\lambda ||x||^2$ (for the L2 regularisation), where $\lambda >0$ is a constant. It is observed that usually working with the regularisation cost function $g(x)$ gives better convergence and prevent overfitting (see e.g. \cite{nielsen}) . Theoretically, regularisation is also good in light of Theorem \ref{Theorem1} and the fact mentioned above that Morse functions are dense.  Also, we may consider L1 regularisation $g(x)=f(x)+\lambda \sum _i|x_i|$, provided we are careful about using Lebesgue's integral and derivatives as in Example \ref{Example5}. 
\label{Example7}\end{example}

\subsection{Backtracking versions of MMT and NAG}\label{SectionBacktrackingMMTNAG}  While saddle points are in general not a problem for both Standard GD and Backtracking GD, it is not rare for these algorithms to converge to bad local minima. MMT and NAG are popular methods aiming to avoid bad local minima. For details, the readers can consult \cite{ruder}.

For the standard version of MMT, we fix two numbers $\gamma, \delta >0$, choose two initial points $z_0,v_{-1}\in \mathbb{R}^m$ and use the following update rule: 
\begin{eqnarray*}
v_n&=&\gamma v_{n-1}+\delta \nabla f(z_n),\\
z_{n+1}&=&z_n-v_n.
\end{eqnarray*}

The standard version of NAG is a small modification of MMT: we fix again two numbers $\gamma, \delta >0$, choose two initial points $z_0,v_{-1}\in \mathbb{R}^m$, and use the update rule: 
\begin{eqnarray*}
v_n&=&\gamma v_{n-1}+\delta \nabla f(z_n-\gamma v_{n-1}),\\
z_{n+1}&=&z_n-v_n.
\end{eqnarray*}
 
If $\gamma =0$, both MMT and NAG reduce to Standard GD.  While observed to work quite well in practice, the convergence of these methods are not proven for functions which are not in $C^{1,1}_L$ or not convex. For example, the proof for convergence of NAG in Section 2.2 in \cite{Nesterov2} requires that the function $f$ is in $C^{1,1}_L$ and is moreover strongly convex. (For this class of functions, it is proven that NAG achieves the best possible convergence rate among all gradient descent methods.) Therefore, it is seen that convergence results for these methods require even stronger assumptions than that of Standard GD, see Subsection \ref{SectionComparison}  below. 

Here, inspired by the Inexact Backtracking GD, we propose the following backtracking versions of MMT and NAG, whose convergence can be proven for more general functions. As far as we know, these backtracking versions are new. 

{\bf Backtracking MMT.} We fix $0<A_1<1<A_2$; $\delta _0, \gamma _0>0$; $1\geq \mu >0$ and $1>\alpha ,\beta >0$, and choose initial points $z_0,v_{-1}\in \mathbb{R}^m$. We construct sequences $\{v_n\}$ and $\{z_n\}$ by the following update rule: 
\begin{eqnarray*}
v_n&=&\gamma _nv_{n-1}+\delta _n\nabla f(z_n),\\
z_{n+1}&=&z_n-v_n.
\end{eqnarray*}
Here the values $\gamma _n$ and $\delta _n$ are chosen in a two-step induction as follows. We start with $\sigma =1$, $\gamma '=\gamma _0$ and $\delta '=\delta _0$ and $v_n=\gamma 'v_{n-1}+\delta '\nabla f(z_n-\gamma v_{n-1})$ and $z_{n+1}=z_n-\sigma v_n$.   While condition i) for Inexact Backtracking GD is not satisfied, we replace  $\gamma '$ by $\gamma '\beta$ and update $v_n$ and $z_{n+1}$ correspondingly.  It can be easily checked that after a finite number of such steps, condition i) in Backtracking GD will be satisfied.  At this point we obtain values $\gamma _n'$ and $\delta _n'$. Then, while condition ii) in Inexact Backtracking GD is not satisfied, we replace $\sigma$ by $\sigma \beta$ and update $v_n$ and $z_{n+1}$ correspondingly.  It can be easily checked that after a finite number of such steps, condition ii) in Backtracking GD will be satisfied. At this point we obtain a value $\sigma _n$. Then we choose $\gamma _n=\sigma _n\gamma _n'$ and $\delta _n=\sigma _n\delta _n'$. 

{\bf Backtracking NAG.} The update rule is similar to that for Backtracking MMT. 

If $\gamma _0=0$,  both Backtracking MMT and Backtracking NAG reduce to Backtracking GD. We have the following result for the convergence of these backtracking versions. 

\begin{theorem} Let $f:\mathbb{R}^m\rightarrow \mathbb{R}$ be in $C^1$. Let $\{z_n\}$ be a sequence constructed by either the Backtracking MMT update rule or the Backtracking NAG update rule. 

1) Either $\lim _{n\rightarrow\infty}||z_n||=\infty$ or $\lim _{n\rightarrow\infty}||z_{n+1}-z_n||=0$. 

2) Assume that $f$ has at most countably many critical points. Then either\\ $\lim _{n\rightarrow\infty}||z_n||$ $=\infty$ or $\{z_n\}$ converges to a critical point of $f$.  

3) More generally, assume that all connected components of the set of critical points of $f$ are compact. Then either $\lim _{n\rightarrow\infty}||z_n||=\infty$ or $\{z_n\}$ is bounded. Moreover, in the latter case the set of cluster points of $\{z_n\}$ is connected. 
\label{TheoremBacktrackingMMTNAG}\end{theorem}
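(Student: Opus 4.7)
The plan is to reduce Theorem \ref{TheoremBacktrackingMMTNAG} to Theorem \ref{TheoremSequenceVersion} on Inexact Backtracking GD. By construction of Backtracking MMT/NAG, after the two-step inner induction the update takes the form $z_{n+1} = z_n - \sigma_n w_n$, where $w_n = \gamma_n' v_{n-1} + \delta_0 \nabla f(z_n)$ in the MMT case (respectively $w_n = \gamma_n' v_{n-1} + \delta_0 \nabla f(z_n - \gamma_n' v_{n-1})$ in the NAG case) satisfies the closeness conditions (\ref{EquationClosenessCondition1})--(\ref{EquationClosenessCondition2}) with respect to $\nabla f(z_n)$, and $\sigma_n \in \{\delta_0, \delta_0\beta, \delta_0\beta^2, \ldots\}$ is the Armijo step size for the direction $w_n$. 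Once this identification is in place, $\{z_n\}$ is exactly an Inexact Backtracking GD sequence in the sense of Theorem \ref{TheoremSequenceVersion}, and its three conclusions transfer verbatim to the Backtracking MMT/NAG sequence.

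The substantive verification is therefore that the two-step inner procedure terminates in finitely many substeps at each iteration $n$. For the first loop, the factor $\gamma'$ is replaced by $\gamma' \beta$ until the closeness conditions hold; as $\gamma' \to 0$ the momentum contribution $\gamma' v_{n-1}$ vanishes, so $w_n \to \delta_0 \nabla f(z_n)$ — in the NAG case one additionally invokes continuity of $\nabla f$ to pass $\gamma' v_{n-1}$ through the gradient argument. Hence $||w_n||/||\nabla f(z_n)|| \to \delta_0$ and the cosine $<\nabla f(z_n), w_n>/(||\nabla f(z_n)|| \cdot ||w_n||) \to 1$ provided $\nabla f(z_n) \neq 0$ (the only interesting case, since a zero gradient means we are already at a critical point). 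Thus the inner loop halts after finitely many rescalings, for prescribed $A_1, A_2, \mu$ compatible with $\delta_0$. The second loop, which shrinks $\sigma$ until Armijo is satisfied along the now-fixed direction $w_n$, terminates by the same reasoning as in Lemma \ref{Lemma2}, using that $w_n$ has strictly positive inner product with $\nabla f(z_n)$.

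With the inner induction shown to produce well-defined $w_n$ and $\sigma_n$ meeting the hypotheses of Inexact Backtracking GD, Theorem \ref{TheoremSequenceVersion} delivers parts 1), 2), and 3) immediately. The main obstacle I anticipate is the compatibility of the prescribed constants: to guarantee the target ratio $||w_n||/||\nabla f(z_n)|| \in [A_1, A_2]$ is attainable in the limit $\gamma' \to 0$, one must have $A_1 \leq \delta_0 \leq A_2$ (or else the algorithm must be modified to also rescale $\delta'$ in the inner loop). Beyond this compatibility bookkeeping, the proof is essentially a transport of Theorem \ref{TheoremSequenceVersion} through the construction, in the spirit of the remark following Theorem \ref{Theorem1} that the proof of Theorem \ref{Theorem1} depends only on the three structural ingredients — bounded step sizes, Armijo's condition, and cluster points being critical — all of which are preserved by the Backtracking MMT/NAG update.
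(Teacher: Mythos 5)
Your proof takes essentially the same route as the paper: the paper's entire argument is the one-line observation that the Backtracking MMT/NAG sequences satisfy the conditions of Inexact Backtracking GD, so Theorem \ref{TheoremSequenceVersion} applies directly. You additionally supply the termination details for the two inner loops that the paper leaves implicit, including the genuine compatibility caveat that the first loop only halts when $A_1\leq\delta_0\leq A_2$ (and $\mu<1$), which the paper's convention $0<A_1<1<A_2$ with arbitrary $\delta_0>0$ does not by itself guarantee.
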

\begin{proof}
In fact, our Backtracking MMT and Backtracking NAG algorithms satisfy conditions for Inexact Backtracking GD, hence Theorem \ref{TheoremSequenceVersion} can be applied to yield the desired conclusions. 
\end{proof}

\subsection{A heuristic argument for the effectiveness of Standard GD}\label{SubsectionHeuristics} Here we give a heuristic argument to show that in practice, for the deterministic case of a single cost function, Backtracking GD will behave like the Standard GD in the long run. Let us consider a function $f:\mathbb{R}^m\rightarrow \mathbb{R}$ which is of class $C^1$, and consider a number $\delta _0>0$ and a point $z_0\in \mathbb{R}^m$. For simplicity, we will assume also that $f$ has only at most countably many critical points (which is the generic case), and that $\nabla f$ is locally Lipschitz at every critical point of $f$ (a reasonable assumption). We then have from Theorem \ref{TheoremBoundednessLearningRates} above that if the sequence $\{z_n\}$ in (\ref{Equation3}) does not contain a sequence diverging to $\infty$, then it will converge to a critical point $z_{\infty}$ of $f$. If we assume moreover that $f$ is $C^2$ near $z_{\infty}$, then we have moreover that $\delta (f,\delta _0,z)>0$ uniformly near $z_{\infty}$. Therefore, since $\delta (f,\delta _0,z)$ takes values in the discrete set $\{\beta ^n\delta _0:~n=0,1,2,\ldots \}$, we have that $\delta (f,\delta _0,z_n)$ is contained in a finite set. Hence the number $\delta _{\infty}:= \inf _{n=1,2,\ldots }\delta _n>0$. Therefore, the Standard GD with learning rate $\delta _0=\delta _{\infty}$ should converge. 

Our experiments with the data set CIFAR10 and CIFAR100 and various different architectures, more details in the next subsection and Section 4, show that this argument is also verified for the practice of using mini-batches in DNN.  In the case where $\nabla F_h(\kappa _h)$ approximates well $\nabla F(\kappa _h)$ and $\delta (F_h,\delta _0,\kappa _h)$ approximates well $\delta (F,\delta _0,\alpha _h)$, then Theorem \ref{TheoremSequenceVersion} can be applied to justify the use of (Inexact) Backtracking GD in DNN. 

\subsection{Two-way Backtracking GD and Main experimental results} In this subsection we present first some modifications of Backtracking GD which are aimed to save time and computations, and then present briefly main experimental results. More detail on experimental results will be presented in Section 4. 

We know that if $z_n$ converges to $z_{\infty}$, and $\delta _0\geq \delta _n>0$ are positive real numbers so that $f(z_n-\delta _n\nabla f(z_n))-f(z_n)\leq -\alpha \delta _n ||\nabla f(z_n)||^2$ for all $n$, then $f(z_{\infty}-\delta _{\infty}\nabla f(z_{\infty}))-f(z_{\infty})\leq -\alpha \delta _{\infty} ||\nabla f(z_{\infty})||$ for $\delta _{\infty}=\limsup _{n\rightarrow \infty}\delta _n$. Moreover, at least for Morse functions, the arguments in the previous subsection show that all the values $\{\delta _n\}$ belong to a finite set. Therefore, intuitively we will save more time by starting for the search of the learning rate  $\delta _n$ not from $\delta _0$ as in the pure Backtracking GD procedure, but from the learning rate $\sigma =\delta _{n-1}$ of the previous step, and allowing increasing $\sigma$, and not just decreasing it, in case $\sigma $  satisfies inequality (\ref{Equation2}) and still does not exceed $\delta _0$. We call this Two-way Backtracking GD. More precisely, it works as follows. At step $n$, choose $\sigma =\delta _{n-1}$. If $\sigma $ does not satisfy (\ref{Equation2}), then replace $\sigma$ by $\beta \sigma $ until (\ref{Equation2}) is satisfied. If $\sigma$ satisfies (\ref{Equation2}), then while $\sigma /\beta $ satisfies (\ref{Equation2}) and $\leq \delta _0$, replace $\sigma $ by $\sigma /\beta$. Then define $\delta _n$ to be the final value $\sigma$. Theorem \ref{TheoremBoundednessLearningRates}  can be used to justify that this procedure should also converge. 

The following example illustrates the advantage of Two-way Backtracking GD, in the deterministic case, compared to the Standard GD and the pure Backtracking GD. 

\begin{example}[Mexican hats]  The Mexican hat example in \cite{absil-mahony-andrews} is as follows (see Equation 2.8 therein). In polar coordinates $z=(r,\theta )$, the function has the form $f(r,\theta )=0$ if $r\geq 1$, and when $r<1$ it has the form
\begin{eqnarray*}
f(r,\theta )=[1-\frac{4r^4}{4r^4+(1-r^2)^4}\sin (\theta -\frac{1}{1-r^2})]e^{-1/(1-r^2)}.
\end{eqnarray*}
For this function,  \cite{absil-mahony-andrews} showed that if we start with an initial point $z_0=(r_0,\theta _0)$, where $\theta _0(1-r_0^2)=1$, then the gradient descent flow $(r(t), \theta (t))$ (solutions to $x'(t)=-\nabla f(x(t))$) neither diverges to infinity nor converges as $t\rightarrow 0$. 

For this example, we have run many experiments with Standard GD, pure Backtracking GD and Two-way Backtracking GD for random choices of initial values $z_0$'s. We found that in contrast to the case of the continuous method, all these three discrete methods {\bf do} converge. We propose the following explanation for this seemingly contradiction. For the continuous method, the gradient descent flow $(r(t),\theta (t))$, with an initial point $z_0$ on the curve $\theta (1-r^2)=1$ will get stuck on this curve, while for the discrete methods right after the first iterate we already escape this curve. 

For this Mexican hat example, we observe that Two-way Backtracking GD works much better than Standard GD and pure Backtracking GD. In fact, for the case where $z_0=(r_0,\theta _0)$ satisfies $\sin (\theta _0)<0$, for a random choice of initial learning rate Standard GD needs many more iterates before we are  close to the limit point than  Two-way Backtracking GD, and pure Backtracking GD needs more total time to run and less stable than Two-way Backtracking GD.     \label{Example8}\end{example}

In our experiments we test the above results and heuristic arguments and also the effectiveness of using them in DNN. We work with datasets CIFAR10 and CIFAR100, on various architectures. The details of these experiments will be described in Section 4. Here we summarise the results. The most special feature of our method is that it achieves high accuracy while being completely automatic: we do not need to use manual fine-tuning as in the current common practice. The automaticity of our method is also different in nature from those in more familiar methods such as Adagrad and Adadelta. More detailed comparison is given in the next subsection.
 
 In practice, we see that the performance of Backtracking GD and Two-way Backtracking GD are almost identical, while the time spent for Backtracking GD is about double (or more) that of time to run Two-way Backtracking GD. Thus, this confirms our intuition that Two-way Backtracking GD helps to save time. Also, it is demonstrated that even here when we do not have just a single cost function but with mini-batches, the average of learning rates (scaled depending on the mini-batch size, see Section 4.1) associated to  mini-batches in the same epoch, when computed with respect to the Backtracking GD scheme, does behave as what argued in our heuristic argument in Subsection \ref{SubsectionHeuristics}. The validation accuracy obtained is also very high and better than state-of-the-art algorithms, see Section 4 for details.

\subsection{Comparison to previous work}\label{SectionComparison} 

In the influential paper \cite{armijo}, where condition (\ref{EquationArmijoCondition}) was introduced into GD, Armijo proved the existence of Standard GD and Backtracking GD for functions in $C^{1,1}_L$, under some further assumptions.  The most general result which can be proven with his method is as follows (see e.g. Proposition 12.6.1 in \cite{lange}).  
\begin{theorem}
Assume that $f$ is in $C^{1,1}_L$, $f$ has compact sublevels (that is, all the sets $\{x:~f(x)\leq b\}$ for $b\in \mathbb{R}$ are compact), and the set of critical points of $f$ is bounded and has at most countably many elements. Let $\delta \leq 1/(2L)$. Then the Standard GD converges to one critical point of $f$. 
\label{TheoremArmijoGeneralisation}\end{theorem}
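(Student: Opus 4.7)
The plan is to reduce the statement to Theorem \ref{Theorem1}(2) via the remark immediately following it, which observes that the proof of Theorem \ref{Theorem1} uses only three ingredients: (i) the step sizes are uniformly bounded by the fixed $\delta_0$; (ii) an Armijo-type descent inequality holds along the iterates; and (iii) every cluster point of $\{z_n\}$ is a critical point of $f$. Here all three ingredients will follow from the $C^{1,1}_L$ hypothesis together with the choice $\delta \leq 1/(2L)$.

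First I would invoke the descent lemma for $C^{1,1}_L$ functions (an immediate consequence of the fundamental theorem of calculus and the Lipschitz bound on $\nabla f$): $f(y) \leq f(x) + \langle \nabla f(x), y-x\rangle + \frac{L}{2}\|y-x\|^2$ for all $x,y$. Specialising to $y = z_{n+1} = z_n - \delta \nabla f(z_n)$ and using $\delta \leq 1/(2L)$, so that $1 - L\delta/2 \geq 3/4$, yields
$$f(z_{n+1}) - f(z_n) \;\leq\; -\delta\bigl(1 - \tfrac{L\delta}{2}\bigr)\|\nabla f(z_n)\|^2 \;\leq\; -\tfrac{3\delta}{4}\|\nabla f(z_n)\|^2,$$
which is precisely the Armijo-type inequality (with $\alpha = 3/4$) for the constant step $\delta$. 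Since $\{f(z_n)\}$ is thus non-increasing, the whole sequence $\{z_n\}$ lies in the sublevel set $\{f \leq f(z_0)\}$, which is compact by assumption; hence $\{z_n\}$ is bounded and $\{f(z_n)\}$ converges. Summing the displayed inequality gives $\sum_{n\geq 0}\|\nabla f(z_n)\|^2 < \infty$, so $\|\nabla f(z_n)\| \to 0$, and by continuity of $\nabla f$ every cluster point of $\{z_n\}$ is a critical point of $f$.

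With ingredients (i)--(iii) verified (with $\delta_0 := \delta$), the remark after Theorem \ref{Theorem1} allows us to apply part (2) of that theorem to the Standard GD sequence here: since the critical set of $f$ is assumed at most countable, the dichotomy asserts that either $\|z_n\| \to \infty$ or $\{z_n\}$ converges to a critical point. The first alternative is ruled out by the boundedness established above, so $\{z_n\}$ converges to a critical point of $f$, as required. There is essentially no technical obstacle: the only analytic input is the descent lemma, and the rest is bookkeeping that feeds the Standard GD iterates into the already-proved machinery of Theorem \ref{Theorem1}. The mild point requiring care is checking that the remark's hypotheses really do capture the constant-step scheme (rather than only the backtracking scheme originally stated), but Step 1 verifies exactly this.
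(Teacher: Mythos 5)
Your argument is correct, but note that the paper does not actually prove Theorem \ref{TheoremArmijoGeneralisation}: it is quoted as a known result, with the reader referred to Proposition 12.6.1 in \cite{lange}, so there is no in-paper proof to match against. What you have written is a legitimate self-contained derivation: the descent lemma computation is right (with $\delta\leq 1/(2L)$ one gets $1-L\delta/2\geq 3/4$, hence Armijo's condition with $\alpha=3/4$), compactness of the sublevel set $\{f\leq f(z_0)\}$ traps the iterates and forces $\sum_n\|\nabla f(z_n)\|^2<\infty$, and Remark \ref{ExampleBestPossible} explicitly licenses feeding any sequence satisfying (i) bounded steps, (ii) Armijo, (iii) cluster points critical, into the proof of Theorem \ref{Theorem1}(2); a connected subset of a countable set is a point, so the bounded sequence converges to a critical point. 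This is essentially the classical Ostrowski-type argument that the cited textbook proof also follows, just routed through the paper's own machinery (the projective compactification is not even needed here since boundedness is already in hand). Two cosmetic remarks: you never use the hypothesis that the critical set is bounded (it is indeed redundant given compact sublevels), and the summability of $\|\nabla f(z_n)\|^2$ already gives $\|z_{n+1}-z_n\|=\delta\|\nabla f(z_n)\|\to 0$ directly, so part 1) of Theorem \ref{Theorem1} is immediate rather than needing the $A(\epsilon)/B(\epsilon)$ decomposition. No gaps.
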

 An analog of Theorem \ref{TheoremArmijoGeneralisation} for gradient descent flow (solutions to $x'(t)=-\nabla f(x(t))$) is also known (see e.g. Appendix C.12 in \cite{helmke-moore}). Both the assumptions that $f$ is in $C^{1,1}_L$ and $\delta $ is small enough are necessary for the conclusion of Theorem \ref{TheoremArmijoGeneralisation}, as shown by the next two very simple examples. 

\begin{example} (When $\gamma =1/2$, a similar example is given as Exercise 1.2.3 in \cite{bertsekas}, without proof. For the sake of completeness, here we provide a proof.) Let $0<\gamma <1$ be a rational number. Let $f:\mathbb{R}\rightarrow \mathbb{R}$ be the function $f(x)=|x|^{1+\gamma }$. Then it can be checked that $f$ is in $C^1$ and $f'(x)$ is H\"older continuous with the H\"older exponent $\gamma$. (We recall that a Lipschitz function is a H\"older function with H\"older exponent $1$.) Moreover, $0$ is the only critical point of $f$ (and is in fact a global minimum) and  $f$ has compact sublevels. 

Now we show that there is a countable set $A\subset \mathbb{R}$ such that for any $\delta _0>0$ and any $0\not= z_0\in \mathbb{R}\backslash A$, the sequence $\{z_n\}$ in (\ref{Equation1}) does not converge to $0$. To this end, it suffices to show the following: There is a countable set $A\subset \mathbb{R}$ so that if $\delta _n$ is any sequence of positive numbers for which the sequence $\{z_n\}$ in (\ref{EquationGradientDescentGeneral}) with $z_0\in \mathbb{R}\backslash A$ converges to $0$, then $\inf _{n\geq 1}\delta _n=0$.

We write $\gamma =p/q$ for relatively prime positive integers $p$ and $q$. Note that for any $\delta ,y\in \mathbb{R}$ there are at most $2q$ solutions $x$ to $x-\delta f'(x)=y$. Therefore, since $\delta (f,\delta _0,x)$ belongs to the countable set $\{\beta ^n\delta _0:n=0,1,2,\ldots \}$, it follows that there is a countable set $A\subset \mathbb{R}$ so that: if $z_n=0$ for some $n$ then $z_0\in A$. Now choose any $z_0\in \mathbb{R}\backslash A$ so that $z_n$ converges to $0$, we claim that $\delta _1:=\inf _{n}\delta (f,\delta _0,z_n)=0$. Assume otherwise that $\delta _1>0$, we will obtain a contradiction as follows. Note that for all $x\not= 0$, we have $x.f'(x)>0$ and $|f'(x)|=(1+\gamma ) |x|^{\gamma }$. Therefore, if $x\not= 0$ is close to $0$ then $|f'(x)|>>|x|$. Thus, since $\lim _{n\rightarrow\infty}z_n=0$ and $z_n\not= 0 $ for all $n$ (by the assumption on $z_0$), by discarding a finite number of points in the sequence $\{z_n\}$ if necessary, we can assume that 
\begin{eqnarray*}
 |z_{n+1}|=|z_n-\delta (f,\delta _0,z_n)f'(z_n)|\geq \delta _1|z_n|^{\gamma}/2
\end{eqnarray*}
for all $n=0,1,2,\ldots $. Putting $\delta _2=\delta _1/2$, by iterating the above inequality we obtain
\begin{eqnarray*}
 |z_{n}|\geq \delta _2^{1+\gamma +\gamma ^2+\ldots +\gamma ^n}.|z_0|^{\gamma ^n},
\end{eqnarray*}
for all $n$. Taking limit when $n\rightarrow\infty$, we obtain 
\begin{eqnarray*}
0=\lim _{n\rightarrow\infty} |z_n|\geq \delta _2^{1/(1-\gamma )}>0, 
\end{eqnarray*}
which is a contradiction. Therefore, $\inf _{n}\delta (f,\delta _0,z_n)=0$, as claimed. 
\label{LemmaHolderContinuity}\end{example}

In contrast, for the function in the above example, Backtracking GD converges to $0$ for every choice of initial point $z_0$.

\begin{example}  Let $\epsilon _0$ be any positive number. Consider a smooth function $f:\mathbb{R}\rightarrow \mathbb{R}$ which on the set $|x|\geq \epsilon _0$ has the value $f(x)=|x|$ (note that the absolute value function $|x|$ is nowadays one of the most popular functions in artificial neural networks). We can easily construct such functions with the additional requirement: $f(x)$ has exactly one critical point $x=0$ which is a global minimum point. Then the derivative $f'(x)$ satisfies: $f'(x)=1$ if $x\geq \epsilon _0$ and $f'(x)=-1$ if $x\leq -\epsilon _0$. Let $\delta _0$ be an arbitrary positive number $>2\epsilon _0$, and $z_0$ be an arbitrary number satisfying $\epsilon _0<z_0<\delta _0-\epsilon _0$. Then the sequence in (\ref{Equation1})  is periodic. In fact, since $z_0>\epsilon _0$ we have $f'(z_0)=1$, and hence  $z_1=z_0-\delta _0f'(z_0)=z_0-\delta _0$. By the conditions on $\epsilon _0$ and $\delta _0$, we have $z_1=z_0-\delta _0<-\epsilon _0$ and hence $f'(z_1)=-1$. Therefore, $z_2=z_1-\delta _0f'(z_1)$ $=$ $(z_0-\delta _0)- \delta _0 \times (-1)=z_0$. Thus the sequence $\{z_n\}$ is periodic, and the cluster points of it are $z_0$ and $z_1$, neither of them is the unique critical point $0$ of $f$.  

\begin{figure}
\begin{center}
{\includegraphics[width=0.8\textwidth]{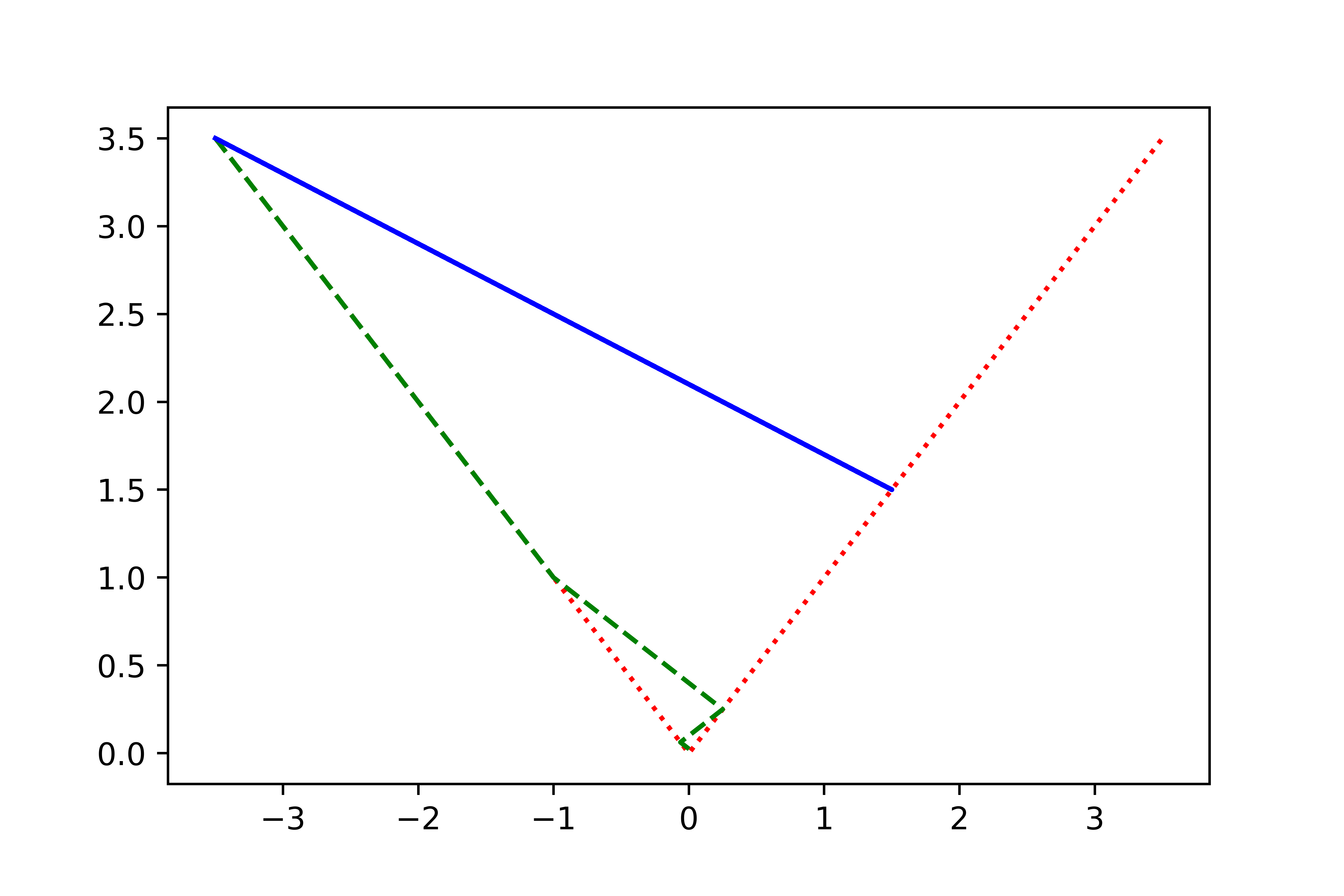}}
\end{center}
\caption{The behaviours of Standard GD and Backtracking GD for Example \ref{Example1}. Red dotted line: the function $f(x)=|x|$; blue solid line: Standard GD gets stuck when going back and forth between two points; green dashed line: Backtracking GD - which always makes sure $f(x-\delta (f,\delta _0,x)\nabla f(x))\leq f(x)$  - works properly to reach the local minimum.} 
\label{fig1}
\end{figure}
\label{Example1}\end{example}

Concerning the issue of saddle points, we have the following very strong result for functions in class $C^{1,1}_L$ (\cite{lee-simchowitz-jordan-recht, panageas-piliouras}).
\begin{theorem}
Let $f$ be in $C^{1,1}_L$ and $\delta < 1/L$. Then there exists a set $E\subset \mathbb{R}^k$ of Lebesgue measure $0$ so that if $x_0\in \mathbb{R}^k\backslash E$, then $\{x_n\}$ in Standard GD, if converges, will not converge to a saddle point.  
\label{TheoremGradientDescentSaddlePoint}\end{theorem}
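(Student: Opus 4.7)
The plan is to recognise the Standard GD update as the iteration of a smooth self-map of $\mathbb{R}^k$ and then invoke the Center-Stable Manifold Theorem at each saddle point. Set $g(x):=x-\delta \nabla f(x)$, so that $x_{n+1}=g(x_n)$. The first step is to check that $g$ is a global bi-Lipschitz homeomorphism of $\mathbb{R}^k$ onto an open set. From $f\in C^{1,1}_L$ one has $\|\nabla f(x)-\nabla f(y)\|\leq L\|x-y\|$, so
\begin{equation*}
\|g(x)-g(y)\|\geq \|x-y\|-\delta\|\nabla f(x)-\nabla f(y)\|\geq (1-\delta L)\|x-y\|,
\end{equation*}
and because $\delta<1/L$ the factor $1-\delta L$ is strictly positive. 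Thus $g$ is injective with Lipschitz inverse on its image; moreover the Jacobian $I-\delta\nabla^2 f$ (which exists at every point where $\nabla^2 f$ exists, and in particular near the critical points we care about) has spectrum in $[1-\delta L,1+\delta L]$, bounded away from $0$. Consequently $g$ is a $C^1$ diffeomorphism onto an open set, and both $g$ and $g^{-1}$ carry Lebesgue-null sets to Lebesgue-null sets.

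Next I would localise the analysis at a saddle point $x^{*}$. By hypothesis (and the convention of \emph{generalised saddle point} introduced earlier in the paper), the Hessian $\nabla^2 f(x^{*})$ has at least one strictly negative eigenvalue $\lambda$, so $Dg(x^{*})=I-\delta\nabla^2 f(x^{*})$ has the eigenvalue $1-\delta\lambda>1$, producing a genuine unstable direction. The Center-Stable Manifold Theorem (in the form used by Lee--Simchowitz--Jordan--Recht and by Panageas--Piliouras) then gives an open neighbourhood $U_{x^{*}}$ of $x^{*}$ and a $C^{1}$ embedded submanifold $W^{cs}_{\mathrm{loc}}(x^{*})\subset U_{x^{*}}$, of dimension equal to the number of eigenvalues of $Dg(x^{*})$ of modulus $\leq 1$; this dimension is at most $k-1$, so $W^{cs}_{\mathrm{loc}}(x^{*})$ has $k$-dimensional Lebesgue measure zero. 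The defining property of the local center-stable manifold is that any forward orbit of $g$ which remains in $U_{x^{*}}$ and converges to $x^{*}$ must lie entirely in $W^{cs}_{\mathrm{loc}}(x^{*})$.

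Combining these two inputs, the set of starting points $x_{0}$ whose Standard GD orbit converges to $x^{*}$ is contained in the countable union $\bigcup_{n\geq 0}g^{-n}\bigl(W^{cs}_{\mathrm{loc}}(x^{*})\bigr)$; since $g^{-1}$ is Lipschitz on its domain, each term is Lebesgue-null, and so is the union. To handle the fact that the set of saddle points may be uncountable, I would cover it by the open family $\{U_{x^{*}}\}$ and use the Lindel\"of property of $\mathbb{R}^{k}$ to extract a countable subcover $\{U_{x^{*}_{i}}\}_{i\in\mathbb{N}}$; then
\begin{equation*}
E \;:=\; \bigcup_{i\in\mathbb{N}}\,\bigcup_{n\geq 0} g^{-n}\bigl(W^{cs}_{\mathrm{loc}}(x^{*}_{i})\bigr)
\end{equation*}
is Lebesgue-null, and any $x_{0}\notin E$ whose orbit converges must converge to a point that is not a generalised saddle.

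The main obstacle I expect is the invocation of the Center-Stable Manifold Theorem: it requires enough regularity of $g$ at $x^{*}$, so some $C^{2}$-type control of $f$ at the critical points is tacitly needed beyond the bare $C^{1,1}_L$ hypothesis, and one has to make sure $g$ is a diffeomorphism (not merely a local one) in order to pull back the local stable manifolds by arbitrary iterates $g^{-n}$ without losing the null-set property. Once those dynamical systems inputs are granted, the remaining work — the Lipschitz invariance of null sets under $g^{\pm 1}$, the countable union, and the Lindel\"of reduction — is routine bookkeeping.
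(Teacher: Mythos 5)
Your proposal is correct and follows essentially the same route the paper indicates for this theorem: the paper (citing Lee--Simchowitz--Jordan--Recht and Panageas--Piliouras) sketches exactly this argument, namely that $\delta<1/L$ makes $x\mapsto x-\delta\nabla f(x)$ a diffeomorphism, that the Stable-Center Manifold Theorem confines orbits converging to a saddle to a measure-zero local manifold, and that the Lindel\"of lemma handles a possibly uncountable set of saddle points. Your added remark about the tacit $C^2$-type regularity needed at the critical points is a fair observation consistent with how the cited references treat it.
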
 
The main idea is that then the map $x\mapsto x-\delta \nabla f(x)$ is a diffeomorphism, and hence we can use the Stable-Center manifold theorem in dynamical systems (cited as Theorem 4.4 in \cite{lee-simchowitz-jordan-recht}). For to deal with the case where the set of critical points of the function is uncountable, the new idea in \cite{panageas-piliouras} is to use Lindel\"off lemma that any open cover of an open subset of $\mathbb{R}^m$ has a countable subcover. However, for convergence of $\{z_n\}$, one has to use Theorem \ref{TheoremArmijoGeneralisation}, and needs to assume more, as seen from Example \ref{LemmaHolderContinuity} above. Therefore, Proposition 4.9 in \cite{lee-simchowitz-jordan-recht} is not valid as stated. 

Theorem \ref{Theorem2}, while has weaker conclusion than that of Theorem \ref{TheoremGradientDescentSaddlePoint}, can be applied to all functions and hence can also be used to justify for the fact that GD in most of the case will contain only minima as cluster points. The next example shows that the idea of using dynamical systems, as in \cite{lee-simchowitz-jordan-recht}, at current cannot be used for Backtracking GD. 
 \begin{example}
Let $\delta _0>0$ be one solution of the equation $p(t)=6t^2-6t+1=0$ (we can see that this equation has one positive solution by observing that $p(1/2)=-1/2<0$ and $p(+\infty )=+\infty$). Let $f:\mathbb{R}\rightarrow \mathbb{R}$ be the function $f(x)=x^3$. Then the map $g(x)=x-\delta (f,\delta _0,x)f'(x)$ is not continuous at $x_0=1$. In fact, the choice of $\delta _0$ implies that 
\begin{eqnarray*}
f(1-\delta _0f'(1))-f(1)=-\delta _0|f'(1)|^2/2,
\end{eqnarray*}
hence $\delta (f,\delta _0,1)=\delta _0$. Since $f'(x)$ is not identically $0$ in a neighbourhood of $1$ and the function $\delta (f,\delta _0,x)$ takes values in a discrete set, if $g(x)$ were continuous at $1$ as a function in $x$, we would have $\delta (f,\delta _0,x)=\delta _0$ for all $x$ sufficiently close to $1$. The latter means that for $x$ sufficiently close to $1$, we have 
\begin{eqnarray*}
(x-3\delta _0x^2)^3-x^3+9\delta _0x^4/2 \leq 0.
\end{eqnarray*}
Dividing by $x^3$, we see that $h(x)=(1-3\delta _0x)^3-1+9\delta _0x/2\leq 0$ for all $x$ closes to $1$. However, it can be checked that for the choice of $\delta _0$, $h'(x)= 9\delta _0(3\delta _0x-1)^2+9\delta _0/2 >0$, and hence there must be a sequence $x_n\rightarrow 1$ so that $h(x_n)>h(1)=0$ for all $n$. Thus, for this sequence, $g(x_n)$ does not converge to $g(1)$, as claimed.
\label{Example2}\end{example}

A key point in the proofs of these papers, as well as of the paper  \cite{absil-mahony-andrews}, is that some estimates on the convergence rate for functions in $C^{1,1}_L$ can be explicitly obtained. In contrast, our proof of Theorem \ref{Theorem1} is very indirect, since such estimates are not available for general $C^1$ functions. However, the proof of part 1 of Theorem \ref{Theorem1} suggests that a  reasonable criterion when working with a general function $f\in C^1$ is to stop the iteration when $\delta _n||\nabla f(x_n)||$ is small enough.    

There are other variants of GD which are regarded as state-of-the-art algorithms in DNN such as MMT and NAG (mentioned in Subsection \ref{SectionBacktrackingMMTNAG}), Adam, Adagrad, Adadelta, and RMSProp  (see an overview in \cite{ruder}). Some of these variants (such as Adagrad and Adadelta) allow choosing learning rates $\delta _n$ to decrease to $0$ (inspired by Stochastic GD, see next paragraph) in some complicated manners which depend on the values of gradients at the previous points  $x_0,\ldots ,x_{n-1}$. However, as far as we know, convergence for such methods are not yet available beyond the usual setting such as in Theorem \ref{TheoremArmijoGeneralisation}.  

Stochastic GD is the default method used to justify the use of GD in DNN, which goes back to Robbins and Monro, see \cite{bottou-etal}. The most common version of it is to assume that we have a fixed cost function $F$ (as in the deterministic case), but we replace the gradient $\nabla _{\kappa}F(\kappa _n)$ by a random vector $g_n(\kappa _n)$ (here the random variables are points in the dataset, see also Inexact GD), and then show the convergence in probability of the sequence of values $F(\kappa _n)$ and of gradients $\nabla _{\kappa}F(\kappa _n)$ to $0$ (in application the random vector $g_n(\kappa _n)$ will be $\nabla _{\kappa }F_{I_n}(\kappa _n)$). However, the assumptions for these convergence results (for $F(\kappa _n)$ and $\nabla _{\kappa}F(\kappa _n)$) to be valid still require those in the usual setting as in Theorem \ref{TheoremArmijoGeneralisation}. In the case where there is noise, the  following additional conditions on the learning rates are needed \cite{robbins-monro}:
\begin{equation}
\sum _{n\geq 1}\delta _n=\infty,~ \sum _{n\geq 1}\delta _n^2<\infty .
\label{EquationStochasticGD}\end{equation} 
However, in Standard GD, which is the most common used version in DNN, all the learning rates are the same and hence condition  $\sum _{n\geq 1}\delta _n^2<\infty$ is violated. Moreover, showing that the gradients $\nabla _{\kappa}F(\kappa _n)$ converge to $0$ is far from proving the convergence of $\kappa _n$ itself. 

When using GD in DNN, even when the underlying function $F$ is in $C^{1,1}_L$, it may be difficult to obtain a good lower bound estimate for the Lipschitz constant $L$, since these functions can have thousands of variables. Hence it can be difficult to obtain a good choice for the learning rate $\delta _0$. The common practice in DNN is to manually fine-tune learning rates \cite{nielsen}: trial and error, do experiments and then observe and modify learning rates until achieving an acceptable descent of the cost (or loss) function. However, this practice is very time-consuming (especially when working with large dataset and/or complicated architectures) and depending too much on the researcher's experience. In contrast, Backtracking GD is automatic.     

{\bf Wolfe's method.} Since Wolfe's method is very close to Backtracking GD, we provide a more detailed comparison for it here. There are some abstract conditions on functions with only directional derivatives (see the definition about serious steps on page 228 in \cite{wolfe}), under which convergence results can be proven. Note, however, that the descent process proposed by Wolfe in \cite{wolfe} is more complicated than Backtracking GD, since at each iteration the learning rate $\delta _n$ - denoted $t_n$ in his paper - and hence the point $z_{n+1}$, is to be chosen with respect to some of 5 choices listed in Definition on page 228 in that paper, not the unique one based on $||\nabla f(z_n)||$ alone as usually done in practice and considered in the current paper. We mention here in particular conditions iii) and iv). Condition iii) is exactly Armijo's condition (\ref{EquationArmijoCondition}). Condition iv) is that for a fixed $c_2>0$ there is $\xi _n$ between $z_n$ and $z_{n+1}$ so that $<\nabla f(\xi _n),v_n>\geq c_2<\nabla f(z_n),v_n>$ {\bf and} $f$ is non-increasing from $z_n$ to $\xi _n$.  

Extracted from  the above two conditions iii) and iv) in Wolfe's method are the following two conditions, usually called Wolfe's conditions
\begin{eqnarray*}
f(z_{n}-\delta _nv_n)-f(z_n)&\leq&-c_1 \delta _n<\nabla f(z_n), v_n>,\\
<\nabla f(z_{n}-\delta _nv_n),v_n > &\leq& c_2 <\nabla f(z_n),v_n>,
\end{eqnarray*}
for some fixed constants $1>c_2>c_1>0$. The first condition is exactly Armijo's theorem (condition iii) in Wolfe's paper). The second condition is only a half of condition iv) in Wolfe's method. It has been shown that if $f$ is a $C^1$ function which is {\bf bounded from below}, then a positive $\delta _n$ can be chosen to satisfy these Wolfe's conditions. Moreover, if $f$ is in $C^{1,1}_L$ and $v_n$ satisfies condition i) in Inexact GD,  then a result by G.  Zoutendijk shows the convergence of $\nabla f(z_n)$ to $0$. For more details, the readers can consult \cite{nocedal-wright}. Therefore, under the assumptions mentioned above (that is $f$ is in $C^{1,1}_L$, bounded from below, and $v_n$ satisfies condition i) in Inexact GD), by combining with Remark \ref{ExampleBestPossible}, we can prove conclusions of Theorem \ref{Theorem1} with Armijo's rule replaced by Wolfe's conditions.  Hence, we can see that the scope of application of Backtracking GD is wider than that of using Wolfe's conditions. Moreover, since Backtracking GD needs only Armijo's condition, while Wolfe's conditions require more, in general the learning rates determined by Backtracking GD will be not smaller than that determined from Wolfe's conditions, and hence intuitively will be better for convergence.  Recently, there is an implementation of Wolfe's conditions in DNN, see \cite{mahrsereci-hennig}. 

\section{Proofs of main theoretical results and further examples}\label{Section2}

The next two lemmas  are key to our main results. Both lemmas are probably well-known to the experts, but because of lack of proper references and because of completeness, we include their proofs here. 

\begin{lemma} Let $f$ be a $C^1$ function. Then

1) For any $x\in \mathbb{R}^m$, there is a positive integer $n_0$ for which the following is satisfied:
\begin{eqnarray*}
f(x-{\beta ^{n_0}\delta _0}\nabla f(x))-f(x)\leq - {\beta ^{n_0}\delta _0\alpha}||\nabla f(x)||^2. 
\end{eqnarray*}

2) For any compact subset $K$ of $\mathbb{R}^m$ with $\inf _{x\in K}||\nabla f(x)||>0$, we have 
\begin{eqnarray*}
\inf _{x\in K}\delta (f,\delta _0,x) >0. 
\end{eqnarray*}

\label{Lemma2}\end{lemma}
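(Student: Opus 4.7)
The plan is to treat both parts via a first-order expansion of $f$ along the line $t\mapsto x-t\nabla f(x)$, with the key observation that because $\alpha<1$, Armijo's condition follows from making the remainder term in the expansion sufficiently small compared with $\|\nabla f(x)\|^2$.

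For part 1, if $\nabla f(x)=0$ the inequality is trivially satisfied with any $n_0$, so assume $\nabla f(x)\neq 0$. Using the fundamental theorem of calculus along the segment from $x$ to $x-t\nabla f(x)$, I would write
\begin{equation*}
f(x-t\nabla f(x))-f(x)=-t\int_0^1\langle \nabla f(x-st\nabla f(x)),\nabla f(x)\rangle\,ds.
\end{equation*}
As $t\to 0^+$, continuity of $\nabla f$ makes the integrand tend to $\|\nabla f(x)\|^2$, so since $\alpha<1$ there exists $T>0$ such that for all $t\in(0,T]$ the integral is at least $\alpha\|\nabla f(x)\|^2$. Pick $n_0$ so that $\beta^{n_0}\delta_0\leq T$; this yields the claimed inequality.

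For part 2, I need to upgrade the argument above to be uniform on $K$. Let $K'$ be a compact neighborhood of $K$ (for example the closed $1$-neighborhood of $K$), let $M:=\sup_{y\in K'}\|\nabla f(y)\|$, and put $m:=\inf_{x\in K}\|\nabla f(x)\|>0$. Uniform continuity of $\nabla f$ on $K'$ gives a $T>0$ such that for every $x\in K$, every $s\in[0,1]$ and every $t\in(0,T]$ the point $x-st\nabla f(x)$ stays in $K'$ and
\begin{equation*}
\bigl|\langle \nabla f(x-st\nabla f(x))-\nabla f(x),\nabla f(x)\rangle\bigr|\leq (1-\alpha)\,m^2/2.
\end{equation*}
Substituting in the integral representation shows that Armijo's inequality holds for every $x\in K$ and every $t\in(0,T]$. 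Choosing $n_1$ with $\beta^{n_1}\delta_0\leq T$ therefore gives $\delta(f,\delta_0,x)\geq \beta^{n_1}\delta_0>0$ for every $x\in K$, proving the uniform lower bound.

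I do not expect a real obstacle here; the only subtlety is keeping the neighborhood $K'$ and the radius $T$ coupled correctly so that the segment $x-st\nabla f(x)$ stays inside the set where uniform continuity has been invoked. Once $T$ is shrunk (if necessary) to ensure $TM\leq 1$, the segment lies in $K'$, and then the uniform continuity estimate is applied directly.
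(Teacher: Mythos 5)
Your proposal is correct and follows essentially the same route as the paper's proof: both write $f(x-t\nabla f(x))-f(x)$ via the fundamental theorem of calculus along the segment, use (uniform) continuity of $\nabla f$ to bound the integrand below by a definite fraction of $\|\nabla f(x)\|^2$, and then pick $\beta^{n_0}\delta_0$ below the resulting threshold. The only cosmetic difference is that you carry a general $\alpha<1$ through the estimate with the factor $(1-\alpha)$, whereas the paper writes out the case $\alpha=1/2$ and notes the general case is analogous.
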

\begin{proof}
We will give the proof for $\alpha =1/2$. The other cases can be treated similarly. 

1) This is a simple consequence of Taylor's expansion for a continuously differentiable multivariable function. Below is the detail. We define a function $g:\mathbb{R}\rightarrow \mathbb{R}$ by the formula $g(t)=f(x-t\delta _0\nabla f(x))$. Then $g$ is continuously differentiable, and by the chain rule $g'(t)=-\delta _0\nabla f(x-t\delta _0\nabla _0f(x)).\nabla f(x)$. Here we use the dot product between two vectors $\nabla f(x-t\delta _0\nabla _0f(x))$ and $\nabla f(x)$ in $\mathbb{R}^m$: if  $u=(u_1,u_2,\ldots ,u_n)$ and $v=(v_1,v_2,\ldots ,v_n)$ then $u.v=u_1v_1+\ldots +u_nv_n$. The Fundamental Theorem of Calculus: $g(1)-g(0)=\int _{t}^1g'(s)ds$ can be explicitly written in this case as follows
\begin{equation}
f(x-t\delta _0\nabla f(x))-f(x)=-\delta _0\int _0^t\nabla f(x-s\delta _0\nabla _0f(x)).\nabla f(x)ds. 
\label{Equation4}\end{equation} 
If $\nabla f(x)=0$, then we can choose simply $n_0=0$. Hence, we can assume for {\bf the remaining} of the proof that $\nabla f(x)\not= 0$, and hence $||\nabla f(x)||>0$. Because $f$ is continuously differentiable, for the positive number $\epsilon _0= 1/2 ||\nabla f(x)||^2$, there is a $\gamma _0>0$  so that if $t_0>0$ is such that $t_0||\delta _0\nabla f(x)||<\gamma _0$ then 
\begin{eqnarray*}
||\nabla f(x-s\delta _0\nabla _0f(x))-\nabla f(x)||\leq \epsilon _0=1/2 ||\nabla f(x)||,
\end{eqnarray*}
for all $0\leq s\leq t_0$. By the triangular inequality, we have the following simple estimate for the integrand in (\ref{Equation3})
\begin{eqnarray*}
\nabla f(x-s\delta _0\nabla _0f(x)).\nabla f(x)&=&(\nabla f(x-s\delta _0\nabla _0f(x))-\nabla f(x)).\nabla f(x)+||\nabla f(x)||^2\\
&\geq&||\nabla f(x)||^2-||\nabla f(x-s\delta _0\nabla _0f(x))-\nabla f(x)||\times ||\nabla f(x)||.
\end{eqnarray*}
Hence, with this choice of $t_0$, for all $0\leq t\leq t_0$ we have from (\ref{Equation3}) that
\begin{eqnarray*}
f(x-t\delta _0\nabla f(x))-f(x)\leq -\frac{t\delta _0}{2}||\nabla f(x)||^2.
\end{eqnarray*}
We can always find a positive integer $n_0$ so that $t=\beta ^{n_0}$ satisfies the needed condition, and for this choice of $t$ we obtain the conclusion of the lemma. 

2) In the proof of 1), we choose $\epsilon _0=\inf _{x\in K}||\nabla f(x)||/2$. Then by the assumption in (1), we have that $\epsilon _0>0$. Since $\nabla f(x)$ is uniformly continuous on compact subsets of $\mathbb{R}^m$, there is a $\gamma _0>0$ for which whenever 
\begin{eqnarray*}
\sup _{x\in K}t_0||\delta _0\nabla f(x)||\leq \gamma _0,
\end{eqnarray*}
then for all $0\leq s\leq t_0$
\begin{eqnarray*}
\sup _{x\in K}||\nabla f(x-s\delta _0\nabla _0f(x))-\nabla f(x)||\leq \epsilon _0=1/2 \inf _{x\in K}||\nabla f(x)||.
\end{eqnarray*}
We can then repeat the remaining of the proof of 1). 
\end{proof}
 
 The next lemma concerns a simple property of trigonometric functions. We recall that the function $\arccos :[0,1]\rightarrow [0,\pi /2]$ is defined so that $\arccos (u)=v$ iff $u=\cos (v)$. For $x=(x_1,\ldots ,x_m), y=(y_1,\ldots ,y_m)\in \mathbb{R}^m$, we define
 \begin{equation}
 dist (x,y)=\arccos (\frac{|1+\sum _{i=1}^mx_iy_i|}{\sqrt{1+\sum _{i=1}^mx_i^2}\sqrt{1+\sum _{i=1}^my_i^2}}). 
 \label{EquationDistance}\end{equation}
 Recall that by Cauchy-Schwartz inequality we always have   
 \begin{eqnarray*}
 0\leq \frac{|1+\sum _{i=1}^mx_iy_i|}{\sqrt{1+\sum _{i=1}^mx_i^2}\sqrt{1+\sum _{i=1}^my_i^2}}\leq 1,
 \end{eqnarray*}
 and hence the function $dist (x,y)$ is well-defined. As used throughout the whole paper, $||x-y||=\sqrt{\sum _{i=1}^m|x_i-y_i|^2}$ is the usual Euclidean distance on $\mathbb{R}^m$.  We have the following result. 
 \begin{lemma}
 There is a constant $C>0$ so that for all $x,y\in \mathbb{R}^m$ we have
 \begin{eqnarray*}
 C||x-y||\geq dist (x,y). 
 \end{eqnarray*}
    \label{Lemma3}\end{lemma}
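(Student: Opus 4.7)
The plan is to interpret $\mathrm{dist}(x,y)$ as (essentially) the spherical angle between the lifts $\hat{x}=(1,x)$ and $\hat{y}=(1,y)$ in $\mathbb{R}^{m+1}$, and then exploit the fact that under this lift $\|\hat{x}-\hat{y}\|=\|x-y\|$. What we actually have to control is the arccosine of
\[
f(x,y) \;=\; \frac{|1+\langle x,y\rangle|}{\sqrt{1+\|x\|^2}\,\sqrt{1+\|y\|^2}} \in [0,1].
\]
Since the absolute value disappears upon squaring, $1-f(x,y)^2$ is a purely polynomial expression, which will be the object I bound.

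First I would reduce the problem to two elementary inequalities. Namely, it suffices to prove
\[
1-f(x,y)^2 \;\leq\; \|x-y\|^2, \qquad \arccos(t) \;\leq\; \tfrac{\pi}{2}\sqrt{1-t^2} \text{ for } t\in[0,1],
\]
because then $\mathrm{dist}(x,y)=\arccos f(x,y)\leq \tfrac{\pi}{2}\sqrt{1-f(x,y)^2}\leq \tfrac{\pi}{2}\|x-y\|$, giving $C=\pi/2$. The second inequality is standard: via the substitution $t=\cos\theta$ with $\theta\in[0,\pi/2]$, it becomes $\theta\leq \tfrac{\pi}{2}\sin\theta$, i.e.\ Jordan's inequality $\sin\theta\geq 2\theta/\pi$ on $[0,\pi/2]$, which follows from concavity of $\sin$.

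For the first inequality, clear denominators and expand:
\[
(1+\|x\|^2)(1+\|y\|^2)-(1+\langle x,y\rangle)^2 \;=\; \|x-y\|^2 + \bigl(\|x\|^2\|y\|^2-\langle x,y\rangle^2\bigr).
\]
By Lagrange's identity (or Cauchy--Schwarz applied after writing $x=y+(x-y)$ so that $\langle x,y\rangle = \|y\|^2+\langle x-y,y\rangle$ and $\|x\|^2=\|y\|^2+2\langle x-y,y\rangle+\|x-y\|^2$), one obtains
\[
\|x\|^2\|y\|^2-\langle x,y\rangle^2 \;=\; \|y\|^2\|x-y\|^2 - \langle x-y,y\rangle^2 \;\leq\; \|y\|^2\|x-y\|^2.
\]
Therefore the numerator is at most $(1+\|y\|^2)\|x-y\|^2$, and dividing by $(1+\|x\|^2)(1+\|y\|^2)$ yields $1-f(x,y)^2\leq \|x-y\|^2/(1+\|x\|^2)\leq \|x-y\|^2$.

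Combining the two steps produces the lemma with the explicit constant $C=\pi/2$. There is no real obstacle here, the only subtle point being that $f(x,y)$ could a priori fail to be continuously differentiable where $1+\langle x,y\rangle=0$ (because of the absolute value), but since the argument works entirely with $f(x,y)^2$ this issue never arises.
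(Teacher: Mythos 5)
Your proof is correct, and it takes a genuinely cleaner route than the paper's, even though both arguments share the same overall strategy of bounding the cosine of $\mathrm{dist}(x,y)$ in terms of $\|x-y\|$ and then converting that into an angle estimate. The paper first disposes of $\|x-y\|\ge 1$ by taking $C\ge\pi$, then for $\|x-y\|\le 1$ checks the sign condition $1+\langle x,y\rangle\ge 0$, bounds the fraction from below by $1-\tfrac12\|x-y\|^2$ using the inequality $1+(a+b)/2\ge\sqrt{1+a}\sqrt{1+b}$, and finally invokes the local estimate $\cos(\pi\epsilon)\le 1-\epsilon^2/2$ valid only for $\epsilon\le\epsilon_0$; the resulting constant $C=\pi/\epsilon_0$ is implicit. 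You instead square away the absolute value and work with $1-f(x,y)^2$, which reduces to the exact identity $(1+\|x\|^2)(1+\|y\|^2)-(1+\langle x,y\rangle)^2=\|x-y\|^2+\bigl(\|x\|^2\|y\|^2-\langle x,y\rangle^2\bigr)$ combined with $\|x\|^2\|y\|^2-\langle x,y\rangle^2\le\|y\|^2\|x-y\|^2$, and then apply the global Jordan-type inequality $\arccos t\le\tfrac{\pi}{2}\sqrt{1-t^2}$ on $[0,1]$. I checked both algebraic identities and the reduction of the arccosine bound to $\sin\theta\ge 2\theta/\pi$ on $[0,\pi/2]$; all steps are valid. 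What your approach buys is the elimination of the case split and the sign discussion, a bound that holds uniformly for all $x,y$, and the explicit (and sharper) constant $C=\pi/2$.
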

 \begin{proof}
  We will choose $C\geq \pi$. Therefore, if $||x-y||\geq 1$ then there is nothing to prove, since the range of the arccos function is $[0,\pi /2]$. 
  
  For $0\leq \epsilon \leq 1$, we define 
 \begin{eqnarray*}
 h(\epsilon )=\inf _{x,y\in \mathbb{R}^m:~||x-y||\leq \epsilon} \frac{|1+\sum _{i=1}^mx_iy_i|}{\sqrt{1+\sum _{i=1}^mx_i^2}\sqrt{1+\sum _{i=1}^my_i^2}}.
 \end{eqnarray*}
 
 By Cauchy-Schwartz inequality $1+(a+b)/2\geq \sqrt{1+a}\sqrt{1+b}$ (which the readers can easily check by squaring two sides and simplifying), for $a=\sum _{i=1}^mx_i^2$ and $b=\sum _{i=1}^my_i^2$, we have
 \begin{eqnarray*}
\frac{1}{\sqrt{1+\sum _{i=1}^mx_i^2}\sqrt{1+\sum _{i=1}^my_i^2}}\geq \frac{1}{1+(\sum _{i=1}^mx_i^2+\sum _{i=1}^my_i^2)/2}. 
 \end{eqnarray*}
  
By assumption, $0\leq \epsilon \leq 1$, and hence if $||x-y||\leq \epsilon\leq 1$ we have by Cauchy-Schwarz inequality
\begin{eqnarray*}
1+\sum _{i=1}^mx_iy_i&=&1+\sum _{i=1}^mx_i^2+\sum _{i=1}^mx_i(y_i-x_i)\\
&\geq& 1+\sum _{i=1}^mx_i^2-\sqrt{\sum _{i=1}^mx_i^2}\sqrt{\sum _{i=1}^m(x_i-y_i)^2}\\
&\geq& 1+ \sum _{i=1}^mx_i^2-\sqrt{\sum _{i=1}^mx_i^2}\geq 0. 
\end{eqnarray*} 
  
 Therefore, under the same assumption on $x,y$ and $\epsilon$:
 \begin{eqnarray*}
 \frac{|1+\sum _{i=1}^mx_iy_i|}{\sqrt{1+\sum _{i=1}^mx_i^2}\sqrt{1+\sum _{i=1}^my_i^2}}&=&\frac{1+\sum _{i=1}^mx_iy_i}{\sqrt{1+\sum _{i=1}^mx_i^2}\sqrt{1+\sum _{i=1}^my_i^2}}\\
 &\geq& \frac{1+\sum _{i=1}^mx_iy_i}{1+(\sum _{i=1}^mx_i^2+\sum _{i=1}^my_i^2)/2}\\
 &=&1-\frac{1}{2}\frac{||x-y||^2}{1+(\sum _{i=1}^mx_i^2+\sum _{i=1^my_i^2})/2}\\
 &\geq&1-\frac{1}{2}||x-y||^2\geq 1-\frac{1}{2}\epsilon ^2.
 \end{eqnarray*}
 Therefore, $h(\epsilon )\geq 1-\epsilon ^2/2$. Using that the arccos function is decreasing on its domain of definition, we then have for $||x-y||\leq \epsilon \leq 1$: 
\begin{eqnarray*}
dist (x,y)&=& \arccos (\frac{|1+\sum _{i=1}^mx_iy_i|}{\sqrt{1+\sum _{i=1}^mx_i^2}\sqrt{1+\sum _{i=1}^my_i^2}})\\
&\leq& \arccos (h(\epsilon )) \leq \arccos (1-\epsilon ^2/2). 
\end{eqnarray*} 
Now, using the classical inequality that $\cos (\pi \epsilon )\leq 1-\epsilon ^2/2$ if $\epsilon\leq \epsilon _0 $ for a small enough $\epsilon _0>0$ (explicitly determined), we have that provided $||x-y||\leq \epsilon _0$ then
\begin{eqnarray*}
dist (x,y)\leq \pi ||x-y||.
\end{eqnarray*}    
Therefore, if we choose $C=\pi /\epsilon _0$, we obtain for all $x,y\in \mathbb{R}^m$: 
\begin{eqnarray*}
dist(x,y)\leq C||x-y||,
\end{eqnarray*}
 as desired.
\end{proof}
 
\begin{definition}[Compact metric spaces and real projective spaces] We will need the notation of a compact metric space and in particular the real projective space $\mathbb{P}\mathbb{R}^m$ which we now briefly recall for the readers' convenience. A {\bf metric space} $(X,d)$ is a set $X$ equipped with a distance $d:X\times X\rightarrow [0,\infty )$, with the following three properties: i) $d(x,y)=0$ iff $x=y$, ii) (Symmetry) $d(x,y)=d(y,x)$, and iii) (Triangle inequality) $d(x,y)+d(y,z)\geq d(x,z)$ for all $x,y,z\in X$. A sequence $\{x_n\}$ is said to converge to $x$ in $(X,d)$ if $\lim _{n\rightarrow\infty}d(x_n,x)=0$. A  metric space $(X,d)$ is {\bf compact} if any sequence $\{x_n\}$ has a convergent subsequence. Note that the usual Euclidean space $(X=\mathbb{R}^m,d=||.||)$ is a metric space but is {\bf not} compact. However, we can define a compact metric space $(\mathbb{P}\mathbb{R}^m,d)$ - called the {\bf real projective space} of dimension $m$ - with the following three properties: i) $\mathbb{P}\mathbb{R}^m$ contains $\mathbb{R}^m$ as a set, ii) For $x,y\in \mathbb{R}^m$ the distance $d(x,y)$ is exactly the function $dist (x,y)$ in (\ref{EquationDistance}), and iii) if $\{x_n\}\subset \mathbb{R}^m$ converges  in $(\mathbb{P}\mathbb{R}^m,d)$ to a point $z\in \mathbb{P}\mathbb{R}^m\backslash \mathbb{R}^m$, then $\lim _{n\rightarrow \infty}||x_n||=\infty$.    
\label{DefinitionRealProjectiveSpace}\end{definition} 
 
We are now ready to prove the main results of this paper. 

\begin{proof}[Proof of Theorem \ref{Theorem1}] For simplicity, we give the proof for $\alpha =1/2$ only. The other cases can be treated similarly. 

1) By construction, we have 
\begin{eqnarray*}
f(z_{n+1})-f(z_n)\leq -||\nabla f(z_n)||\times ||z_{n+1}-z_n||/2,
\end{eqnarray*}
and hence (by multiplying both sides with $-1$) for all $n$:
\begin{eqnarray*}
f(z_{n})-f(z_{n+1})\geq ||\nabla f(z_n)||\times ||z_{n+1}-z_n||/2.
\end{eqnarray*}
 
Since $\{f(z_n)\}$ is a decreasing sequence, we have two cases to consider. 

Case 1: $\lim _{n\rightarrow\infty}f(z_n)=-\infty$. In this case, it follows easily (since $f$ is a continuous function, and hence is bounded on any compact set) that $\lim _{n\rightarrow\infty}||z_n||=\infty$. 

Case 2: $\{f(z_n)\}$ is bounded. Then
\begin{eqnarray*}
\lim _{n\rightarrow\infty}(f(z_n)-f(z_{n+1}))=0. 
\end{eqnarray*}

Fix $\epsilon >0$ a small number. We partition $\{0,1,2,3,\ldots \}$ into $2$ sets: 
\begin{eqnarray*}
A(\epsilon )=\{n:~\delta (f,\delta _0,z_n)||\nabla f(z_n)||\leq \epsilon \};~~B(\epsilon )&=&\{0,1,2,\ldots \}\backslash A(\epsilon ).
\end{eqnarray*}

For $n\in A(\epsilon )$, we have from (\ref{Equation3}) that
\begin{eqnarray*}
||z_{n+1}-z_n||=\delta (f,\delta _0,z_n)||\nabla f(z_n)||\leq \epsilon .
\end{eqnarray*}

For $n\in B(\epsilon )$ we have
\begin{eqnarray*}
||\nabla f(z_n)||\geq \frac{\epsilon}{\delta (f,\delta _0,z_n)}\geq \frac{\epsilon}{\delta _0}. 
\end{eqnarray*} 

Now we can choose $n(\epsilon )>0$ so that for $n\geq n(\epsilon )$ then
\begin{eqnarray*}
0\leq f(z_n)-f(z_{n+1})\leq \epsilon ^2. 
\end{eqnarray*}
Combining the above inequalities, we obtain, for $n\geq n(\epsilon )$ and $n\in B(\epsilon )$
\begin{eqnarray*}
\epsilon ^2&\geq&f(z_n)-f(z_{n+1})\geq \delta (f,\delta _0,z_n)||\nabla f(z_n)||^2/2\\
&=&||z_{n+1}-z_n|| \times ||\nabla f(z_n)||/2\\
&\geq &\frac{\epsilon}{2\delta _0}||z_{n+1}-z_n||. 
\end{eqnarray*}
Therefore, $2\delta _0\epsilon \geq ||z_{n+1}-z_n||$ if $n\in B(\epsilon )$ and $n\geq n(\epsilon )$. Therefore, for all $n\geq n(\epsilon )$ we have $||z_{n+1}-z_n||\leq \max \{\epsilon ,2\delta _0\epsilon\}$. Since $\epsilon >0$ is arbitrary, it follows that in Case 2 we have 
\begin{eqnarray*}
\lim _{n\rightarrow\infty}||z_{n+1}-z_n||=0. 
\end{eqnarray*}

2) We let $(\mathbb{P}\mathbb{R}^m,d)$ be the real projective space of dimension $m$ which we introduced in the front of the proof of Theorem \ref{Theorem1}. Let $\{z_n\}$ be the sequence from (\ref{Equation3}), with an arbitrary initial point $z_0\in \mathbb{R}^m$ and an arbitrary choice of $\delta _0$. 

We have two cases to consider. 

Case 1: $\lim _{n\rightarrow\infty}f(z_n)=-\infty$. In this case, we then have $\lim _{n\rightarrow\infty}||z_n||=\infty$. 

Case 2: The remaining case where $\{f(z_n)\}$ is bounded. In this case, by part 1 above we have $\lim _{n\rightarrow\infty}||z_{n+1}-z_n||=0$. By Lemma \ref{Lemma3} and the properties of the real projective space mentioned in the front of the proof of Theorem \ref{Theorem1}, we have $d(z_{n+1},z_n)\leq C||z_{n+1}-z_n||$ for all $n$, where $C>0$ is a constant. In particular, we also have $\lim _{n\rightarrow\infty}d(z_{n+1},z_n)=0$. 

Let $D$ be the cluster set of the sequence $\{z_n\}$ in the usual Euclidean space $(\mathbb{R}^m, ||.||)$, and let $D'$ be the cluster set of the sequence $\{z_n\}$ in the real projective space  $(\mathbb{P}\mathbb{R}^m,d)$. While the metrics $||.||$ and $d$ are different on $\mathbb{R}^m$, they induce the same topology on $\mathbb{R}^m$. Therefore, from elementary point set  topology, we obtain that $D'$ is equal to the closure $\overline{D}$ of $D$ in $(\mathbb{P}\mathbb{R}^m,d)$, and $D=D'\cap \mathbb{R}^m$.

Because in Case 2 we showed above that $\lim _{n\rightarrow\infty}d(z_{n+1},z_n)=0$, we can apply results in \cite{asic-adamovic} to the compact metric space $(\mathbb{P}\mathbb{R}^m,d)$ to obtain that $D'$ is connected. Since $D$ must be contained in the set of critical points of $f$, it is at most countable by the assumption. Then from elementary point set topology we have the following conclusion: 

i) Either $D=\emptyset$, thus all cluster points of $\{z_n\}$ are contained in $\mathbb{P}\mathbb{R}^m\backslash \mathbb{R}^m$, and hence $\lim _{n\rightarrow\infty}||z_n||=\infty$,

or 

ii) $D=D'= $ 1 point, that is $\{z_n\}$ converges to a unique point $z_{\infty}\in \mathbb{R}^m$.

3) The proof is similar to that of part 2 above. 
\end{proof}

Next we give the proof of Theorem \ref{Theorem2}. 
\begin{proof}[Proof of Theorem \ref{Theorem2}] As above, we will treat only the case $\alpha =1/2$ here.  We may assume that $z_{\infty}=0$ and $f(0)=0$. Since $z_{\infty}=0$ is a critical point of $f$, we have $\nabla f(0)=0$. Hence, for $z\in B(0,\epsilon _0)$, where $\epsilon _0>0$ small enough, and for $0<\delta <\delta _0$, we get by Taylor's expansion
\begin{eqnarray*}
\nabla f(z)&=&\nabla f(0).z+\int _0^1\nabla ^2f(tz).zdt\\
&=&\int _0^1\nabla ^2f(0).zdt+\int _0^1[\nabla ^2f(tz)-\nabla ^2f(0)].zdt\\
&=&\nabla ^2f(0).z+\gamma _1(z).||z||,
\end{eqnarray*}
where $\lim _{z\rightarrow 0}\gamma _1(z)=0$. 

Using the same argument we obtain
\begin{eqnarray*}
f(z-\delta \nabla f(z))&=&f(0)+\nabla f(0).(z-\delta \nabla f(z))\\
&&+\int _0^1\nabla ^2f(t(z-\delta \nabla f(z))).(z-\delta \nabla f(z)).(z-\delta \nabla f(z))dt\\
&=&\nabla ^2f(0).(z-\nabla ^2f(0).z).(z-\nabla ^2f(z_0).z)+\gamma _2(z)||z||^2,
\end{eqnarray*}
where $\lim _{z\rightarrow 0}\gamma _2(z)=0$. 

Since $\nabla ^2f(0)$ is a symmetric real square matrix (hence is diagonalisable) and with at least one negative eigenvalue, by Silvester's law of inertia we may assume that in $B(0,\epsilon )$ the quadratic form $\nabla ^2f(0).z.z$, where $z=(z_1,\ldots ,z_k)$ has the form (where $z_1,\ldots ,z_j$ correspond to eigenvectors of positive eigenvalues, $z_{j+1},\ldots ,z_m$ correspond to eigenvectors of zero eigenvalue, and $z_{m+1},\ldots ,z_k$ correspond to eigenvectors of positive eigenvalues of $\nabla ^2f(0)$, and hence we can - after applying a linear change of coordinates - assume that the eigenvalues of $\nabla ^2f(0)$ are $\pm 1$ and $0$)
\begin{eqnarray*}
\nabla ^2f(0).z.z=(z_1^2+\ldots +z_j^2)-(z_{m+1}^2+\ldots +z_k^2),
\end{eqnarray*}
where $m\leq k-1$. From the above calculations, we obtain (noting that $\nabla ^2f(0).z=(z_1,\ldots ,z_m,-z_{m+1},\ldots ,-z_k)$)
\begin{eqnarray*}
f(z-\delta \nabla f(z))&=&\nabla ^2f(0).(z-\nabla ^2f(0).z).(z-\nabla ^2f(0).z)+\gamma _2(z)||z||^2\\
&=&-4(z_{m+1}^2+\ldots +z_k^2)+\gamma _2(z)(z_1^2+\ldots +z_k^2).
\end{eqnarray*}
Therefore, provided $z=(z_1,\ldots ,z_k)\in B(0,\epsilon _0)$ and $4(z_{m+1}^2+\ldots +z_k^2)>|\gamma _2(z)|(z_1^2+\ldots +z_k^2)$, then for the sequence $\{z_n\}$ in Equation (\ref{Equation3}) with the initial value $z_0=z$ we have $f(z_n)\leq f(z_1)<0$ for all $n\geq 1$. Hence $\{z_n\}$ does not contain any subsequence converging to $z_{\infty}=0$. Therefore, such a $z$ belongs to $\mathcal{D}(z_{\infty})$. 

If we define, for any $0<\epsilon \leq \epsilon _0$, the number $\gamma _{\epsilon }=\sup _{z\in B(0,\epsilon )}|\gamma _2(z)|$, then $\lim _{\epsilon \rightarrow 0}\gamma _{\epsilon }=0$. Moreover, the open set $$U(z_{\infty},\epsilon )=\{z=(z_1,\ldots ,z_k)\in B(z_{\infty},\epsilon ):~4(z_{m+1}^2+\ldots +z_k^2)>\gamma _{\epsilon}(z_1^2+\ldots +z_k^2)\}$$ belongs to $\mathcal{D}(z_{\infty})$, and   
\begin{eqnarray*}
\lim _{\epsilon\rightarrow 0}\frac{Vol (U(z_{\infty}, \epsilon ))}{Vol (B(z_{\infty}, \epsilon ))}=1. 
\end{eqnarray*}
Therefore, the proof of the theorem is completed. 
\end{proof}

\section{Experimental results}\label{Section3}
 In this section we illustrate the effectiveness of the new methods with experiments on the CIFAR10 and CIFAR100 datasets, using various state-of-the-art DNN models Resnet18 (\cite{He}), MobileNetV2 (\cite{Sandler}), SENet(\cite{Hu}), PreActResnet18(\cite{He2}) and Densenet121(\cite{Huang}) showing that it is on par with current state-of-the-art methods such as MMT, NAG, Adagrad, Adadelta, RMSProp, Adam and Adamax, see \cite{ruder}. 
 
Before presenting the details of the  experimental results, we present some remarks about the actual implementation of our methods in DNN, to deal with the randomness arising from the practice of using mini-batches.      
  
\subsection{Some remarks about implementation of GD in DNN} In this section we mention some issues one faces when applying GD (in particular, Backtracking GD) to the mini-batch practice in DNN, and also the actual implementations of mini-batc backtracking methods MBT-GD, MBT-MMT and MBT-NAG.  
 
\subsubsection{Rescaling of learning rates}\label{SectionRescaling} Since the cost functions obtained from different mini-batches are not exact even though they may be close, using GD iterations many times can accumulate errors and lead to explosion of errors. If we use the backtracking method for a mini-batch, the obtained learning rate is only optimal in case we continue the training with that mini-batch. Even if we take the average of the learning rates from many batches, using directly this value in GD can cause a lot of noise from covariance between batches. This phenomenon has been observed in practice, and the method of rescaling of learning rates has been proposed to prevent it, in both Standard GD \cite{Krizhevsky} and Wolfe's method \cite{mahrsereci-hennig}. The main idea in these papers is that we should rescale learning rates depending on the size of mini-batches, and a popular choice is to use a linear dependence. Roughly speaking, if $\delta $ is a theoretically good learning rate for the cost function of full batch, and the ratio between the size of full batch and that of a mini-batch is $\rho=k/N$ (where $N=$ the size of full batch and $k=$ the size of a mini-batch), then this popular choice suggests to use instead the learning rate $\delta /\rho$. 

Here, we propose a new rescaling scheme that the bigger learning rate $\delta /\sqrt{\rho}$ should also work in practice. Our justification comes from the stochastic gradient updating with Gaussian noise (\cite{Jastrzebski}):
\begin{equation}
z_{n+1}=z_n-\delta _n (\nabla f(z_n) + \frac{r}{\sqrt{N}}),
\label{EquationGradientDescentWithNoise}\end{equation}
with $r$ is a zero mean Gaussian random variable with covariance of $\nabla f(z_n)$ from whole population of samples. The updating term $\delta _n (\nabla f(z_n) + \frac{r}{\sqrt{N}})$ bears a random variable noise $\delta _n \frac{r}{\sqrt{N}}$. In case of mini-batching, this noise is $\delta _n \frac{r}{\sqrt{k}}$. If we want to maintain the noise level of mini-batch similar to that of full batch, we can simply rescale $\delta_n$ by a factor $\sqrt{\frac{k}{N}}$. One can notice that using $\rho=k/N$ in (\cite{Jastrzebski}) has the effect of trying to make the training loss/ accuracy of using small mini-batch the same as that of using larger mini-batch or full batch after each epoch. This is not essential because what we really need is fast convergence and high accuracy for stochastic optimising, not full batch imitation. A larger learning rate such as $\rho = \sqrt{\frac{k}{N}}$ can help to accelerate the convergence while still keep low noise level. \cite{Smith2} demonstrated that networks trained with large learning rates use fewer iterations, improve regularization and obtain higher accuracies.

We have checked with experiments, see below, that this new rescaling scheme works well in practice.    

\subsubsection{Mini-batch backtracking algorithms} While being automatic and stable behaviour, Backtracking GD is still not popular compared to Standard GD, despite the latter requires a lot of efforts for manually fine-tuning. 

One reason is that when using Backtracking GD for a mini-batch, we obtain a learning rate which is good for this mini-batch but not for the full batch, as mentioned in the previous Subsection. We have resolved this by using rescaling of learning rates.  

Another reason is that Backtracking GD, which needs more computations at each step, is much slower than Standard GD. We can solve this problem by using Two-way Backtracking in Section 2. 

Combining the above two ideas, we arrive at a new method which we call Mini-batch Two-way Backtracking GD (MBT-GD), if we regard Backtracking GD as a learning rate finder for Standard GD. The precise procedure is as follows. We only need to apply Backtracking GD a small number of times (tens) at the beginning of the training process or periodically at the beginning of each epoch, for several first mini-batches, take the mean value of these obtained learning rates and rescale to achieve a good learning rate, and then switch to Standard GD. The cost for doing this is very small or negligible in the whole optimising process. We can also apply the same idea to obtain Mini-batch Two-way Backtracking MMT (MBT-MMT) and Mini-batch Two-way Backtracking NAG (MBT-NAG). 

The above idea is compatible with recent research (\cite{Wilson}, \cite{Zhang}), which suggests that traditional GD as well as MMT and NAG with a good learning rate can be on par or even better than Adam and other adaptive methods in terms of convergence to better local minima. The common practice in Deep Learning to find good learning rates is by manually fine-tuning, as mentioned above. Our Mini-batch Two-way Backtracking GD can be used to automatically fine-tune learning rates. Below we describe briefly the details of our mini-batch methods MBT-GD, MBT-MMT and MBT-NAG.

For MBT-GD, we simply compute an optimal learning rate at the beginning of the training process by applying backtracking line search to several mini-batches (e.g $20-50$ mini-batches), take the mean value of obtained learning rates with rescaling justification and use it in Standard GD for next training iterations. This method is inspired from cyclic learning rates (\cite{Smith}) and fastai learning rate finder (\cite{fastai},\cite{lrfinder}) (both require manual interference for the learning rate schedule), but use instead backtracking line search method in automatic manner (and hence is automatic). The recommended value for the hyper parameter $\alpha$ is $10^{-4}$, which means that we accept most of descent points but use scaling justification to reduce noise effects. This will give larger workable learning rates to accelerate speed and improve regularization (\cite{Smith2}). We keep using this learning rate value for Standard GD until the training gets stuck (e.g. no loss decreasing in $5$ consecutive epochs), at which time we then switch to Backtracking GD as in the beginning but now with $\alpha=0.5$. Using larger $\alpha$ when being near local minima gives us learning rates of appropriate size to guarantee convergence (otherwise, if learning rate is too big, then we can overshoot and leave the critical point, see Example \ref{Example1}).

For MBT-MMT and MBT-NAG, we use the same method to compute the optimal learning rate, but now at the beginning of every epoch (or after some fixed iterations of Standard GD) in order to take advantage of momentum accumulation which is the strong point of MMT and NAG. As in the case of MBT-GD, it is also recommended to use $\alpha=10^{-4}$ in most of the training process until the training gets stuck (e.g. no loss decreasing in $5$ consecutive epochs), at which time we then switch to use $\alpha=0.5$ and turn off momentum, too. This can help accelerate the convergence at the early stages of the training and take more care about the descent near local minima at later stages of the training. This is very similar to the methods of learning rate warming up and decay, see an illustration in Subsection \ref{Experiment1}. We also note that when doing experiments, we do not really follow the precise definition of Backtracking MMT and Backtracking NAG as in Section 2, which is complicated and which we will explore in more detail in future work. For the experiments here, we use the following simplified algorithm: fix the value of $\gamma$ to $0.9$ (as commonly used in practice) and choose $\delta _n$ by Backtracking GD. That is, we seek to find good learning rates in the standard MMT and NAG algorithms by using Backtracking GD. The intuition is that as Backtracking GD can find good learning rates for Standard GD, it can also find good learning rates for MMT and NAG. The experiments, see below, verify this speculation.       

For the hyper-parameter $\beta$, we could use any value from $0.5$ to $0.95$. By the very nature of backtracking line search, it is intuitive to see that a specific choice of $\beta $ does not affect too much the behaviour of Backtracking GD, see an illustration in Subsection \ref{Experiment2}. For the sake of speed, we use $\beta = 0.5$ and the number of mini-batches to apply backtracking line search (at the beginning of training or each epoch) to be $20$. This can help the training speed of MBT-MMT and MBT-NAG to be about $80\%-99\%$ of the training speed of MMT and NAG, depending on the batch size. The trade-off is inexpensive since we do not need to do manually fine-tuning of learning rate (which takes a lot of time and effort). 

It is worthy to note that the above settings for MBT-GD, MBT-MMT and MBT-NAG are  fixed in all experiments in this section and hence all obtained results come from entirely automatic training without any human intervention. 

\begin{remark}
Nowadays, in most of modern DNN, there are dropout and batch normalizations layers designed to prevent overfitting and to reduce internal covariate shift (\cite{Ioffe2015}). When using Backtracking GD for these layers, more cares are needed than usual, since non-deterministic outcomes of weights can cause unstable for our learning rate fine-tuning method. After many experiments, we find that some specific procedures can help to reduce the undesired effects of these non-deterministic outcomes. For dropout layers, we should turn them off when using Backtracking GD and turn them on again when we switch to Standard GD. For batch normalizations, we need to make sure using the training/testing flags in the consistent way to obtain the right values for Condition (\ref{Equation2}) and avoid causing non-deterministic and unstable for the backtracking process. 
\label{RemarkDropoutBatchNormalization}
\end{remark}

\subsection{Experiment 1: Behaviour of learning rates for Full Batch}\label{Experiment1}

In this experiment we check the heuristic argument in Subsection \ref{SubsectionHeuristics} for a single cost function. We do experiments with Two-way Backtracking GD for two cost functions: one is the Mexican hat in Example \ref{Example8}, and the other is the cost function coming from applying Resnet18 on a random set of 500 samples of CIFAR10. See Figure \ref{fig:lr_attenuation}.

\begin{figure}
\centering
        \begin{subfigure}[b]{0.5\textwidth}
                \includegraphics[width=\linewidth]{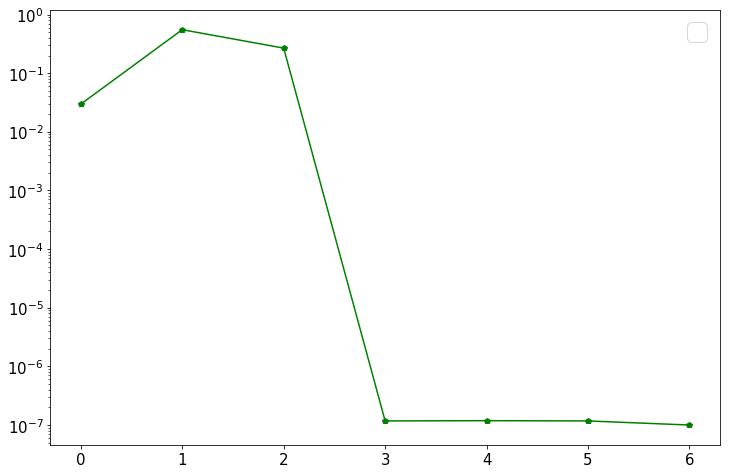}
                \caption{Mexican Hat}
                \label{fig:lr_MexicanHat}
        \end{subfigure}%
        \begin{subfigure}[b]{0.5\textwidth}
                \includegraphics[width=\linewidth]{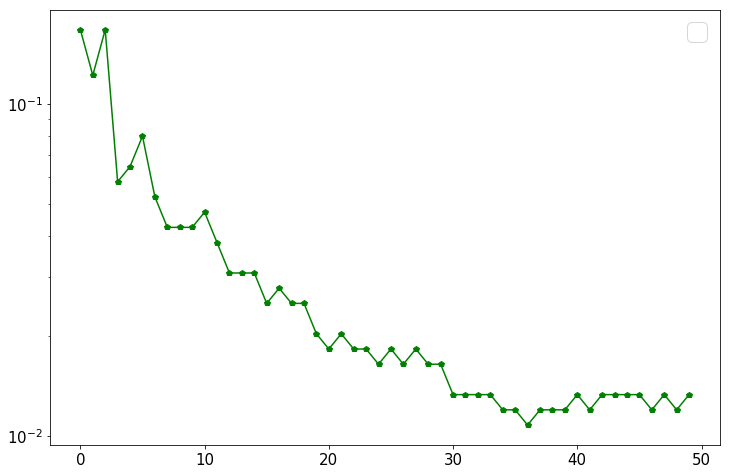}
                \caption{Resnet18}
                \label{fig:Resnet18}
        \end{subfigure}%
        \caption{Learning rate attenuation using Two-way Backtracking GD on (a) Mexican hat function (b) Resnet18 on a dataset contains 500 samples of CIFAR10 (full batch)}\label{fig:lr_attenuation}
\end{figure}

\subsection{Experiment 2: Behaviour of learning rates for Mini-Batch}\label{Experiment5}
In this experiment we check the heuristic argument in Subsection \ref{SubsectionHeuristics} in the mini-batch setting. We do experiments with MBT-MMT and MBT-NAG for the model Resnet18 on CIFAR10 and CIFAR100. See Figure \ref{fig:lr_attenuation_mini}.

\begin{figure}
\centering
        \begin{subfigure}[b]{0.5\textwidth}
                \includegraphics[width=\linewidth]{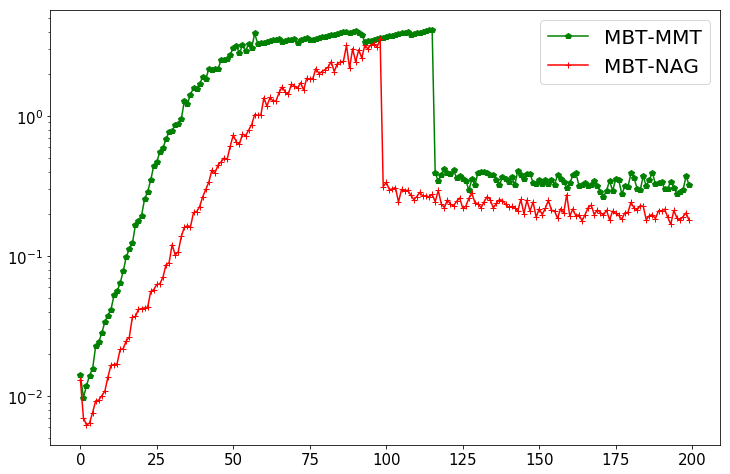}
                \caption{CIFAR10}
                \label{fig:lr_cifar10}
        \end{subfigure}%
        \begin{subfigure}[b]{0.5\textwidth}
                \includegraphics[width=\linewidth]{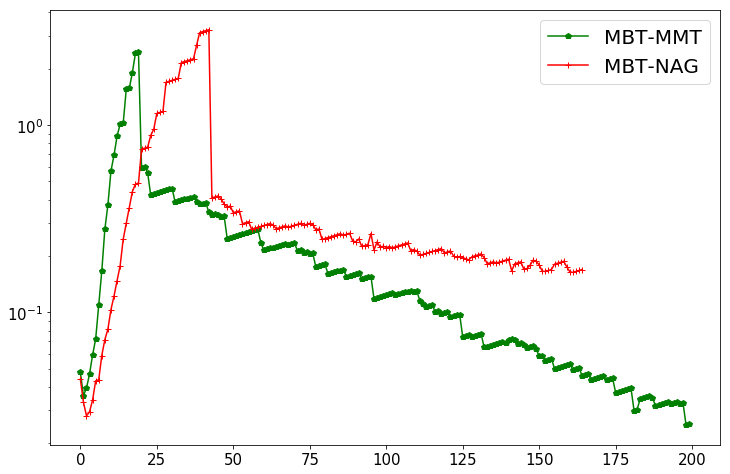}
                \caption{CIFAR100}
                \label{fig:lr_cifar100}
        \end{subfigure}%
        \caption{Learning rate attenuation using Two-way Backtracking GD in the mini-batch setting on Resnet18 on (a) CIFAR10 (b) CIFAR100 }\label{fig:lr_attenuation_mini}
\end{figure}

\subsection{Experiment 3: Stability of learning rate finding using backtracking line search}\label{Experiment2}

In this experiment, we apply MBT-GD to the network Resnet18 on the dataset CIFAR10, across $9$ different starting learning rates (from $10^{-6}$ to $100$) and $7$ different batch sizes (from $12$ to $800$), see Table \ref{tab:lr_batch}. With any batch size in the range, using the rough grid $\beta=0.5$ and only $20$ random mini-batches, despite the huge differences between starting learning rates, the obtained averagely-optimal learning rates stabilise into very close values. This demonstrates that our method works robustly to find a good learning rate representing the whole training data with any applied batch size. 

\begin{table}
\fontsize{8}{6}\selectfont
\setlength\extrarowheight{3pt}
  \centering
  \begin{tabular}{|r|c|c|c|c|c|c|c|c|c|c|}
  \hline
    LR				& $100$   & $10$    & $1$     & $10^{-1}$&$10^{-2}$ &$10^{-3}$& $10^{-4}$&$10^{-5}$	&$10^{-6}$\\
\hline
 $12$ & $0.0035$ & $0.0037$ & $0.0037$ & $0.0040$ & $0.0046$ & $0.0040$ & $0.0038$ & $0.0036$ & $0.0038$\\
\hline
$25$ & $0.0050$ & $0.0051$ & $0.0051$ & $0.0058$ & $0.0049$ & $0.0048$ & $0.0052$ & $0.0057$ & $0.0044$\\
\hline
$50$ & $0.0067$ & $0.0067$ & $0.0065$ & $0.0060$ & $0.0067$ & $0.0061$ & $0.0066$ & $0.0070$ & $0.0075$\\
\hline
$100$ & $0.0111$ & $0.0101$ & $0.0093$ & $0.0104$ & $0.0095$ & $0.0098$ & $0.0098$ & $0.0099$ & $0.0085$\\
\hline
$200$ & $0.0140$ & $0.0143$ & $0.0137$ & $0.0147$ & $0.0125$ & $0.0130$ & $0.0135$ & $0.0122$ & $0.0126$\\
\hline
$400$ & $0.0159$ & $0.0155$ & $0.0167$ & $0.0153$ & $0.0143$ & $0.0174$ & $0.0164$ & $0.0166$ & $0.0154$\\
\hline
$800$ & $0.0153$ & $0.0161$ & $0.0181$ & $0.0188$ & $0.0170$ & $0.0190$ & $0.0205$ & $0.0154$ & $0.0167$\\
\hline
 
\hline
  \end{tabular}
  \caption{Stability of Averagely-optimal learning rate obtained from the mini-batch two-way backtracking gradient descent across $9$ different starting learning rates (LR) ranging from $10^{-6}$ to $100$ and $7$ different batch sizes from $12$ to $800$. Applied using Resnet18 on CIFAR10. ($\alpha=10^{-4}$, $\beta=0.5$)} 
\label{tab:lr_batch}
\end{table}

\subsection{Experiment 4: Comparison of Optimisers}\label{Experiment3}
In this experiment we compare the performance of our methods (MBT-GD, MBT-MMT and MBT-NAG) with state-of-the-art methods. See Table \ref{tab:optimizers}. We note that MBT-MMT and MBT-NAG usually work much better than MBT-GD, the explanation may be that MMT and NAG escape bad local minima better. Since the performance of both MBT-MMT and MBT-NAG are at least $1.4\%$ above the best performance of state-of-the-art methods (in this case achieved by Adam and Adamax), it can be said that our methods are better than state-of-the-art methods.  

\begin{table}[htp]
\fontsize{8}{6}\selectfont
\setlength\extrarowheight{3pt}
  \centering
  \begin{tabular}{|l|c|c|c|c|c|c|c|c|c|c|}
  \hline
LR	& $100$   & $10$    & $1$     & $10^{-1}$&$10^{-2}$ &$10^{-3}$& $10^{-4}$&$10^{-5}$	&$10^{-6}$\\
\hline
SGD & $10.00$ & $89.47$ & $91.14$ & \it{92.07} & $89.83$ & $84.70$ & $54.41$ & $28.35$ & $10.00$\\
MMT & $10.00$ & $10.00$ & $10.00$ & \it{92.28} & $91.43$ & $90.21$ & $85.00$ & $54.12$ & $28.12$\\
NAG & $10.00$ & $10.00$ & $10.00$ & \it{92.41} & $91.74$ & $89.86$ & $85.03$ & $54.37$ & $28.04$\\
\hline
Adagrad & $10.01$ & $81.48$ & $90.61$ & $88.68$ & \it{91.66} & $86.72$ & $54.66$ & $28.64$ & $10.00$\\
Adadelta & $91.07$ & $92.05$ & \it{92.36} & $91.83$ & $87.59$ & $73.05$ & $46.46$ & $22.39$ & $10.00$\\
RMSprop & $10.19$ & $10.00$ & $10.22$ & $89.95$ & $91.12$ & \it{91.81} & $91.47$ & $85.19$ & $65.87$\\
Adam & $10.00$ & $10.00$ & $10.00$ & $90.69$ & $90.62$ & \it{92.29} & $91.33$ & $85.14$ & $66.26$\\
Adamax & $10.01$ & $10.01$ & $91.27$ & $91.81$ & \it{92.26} & $91.99$ & $89.23$ & $79.65$ & $55.48$\\
\hline
MBT-GD  & \multicolumn{9}{c|}{\it{91.64}}\\
MBT-MMT & \multicolumn{9}{c|}{\it{93.70}}\\
MBT-NAG & \multicolumn{9}{c|}{\bf{93.85}}\\
 
\hline
  \end{tabular}
   \caption{Best validation accuracy after $200$ training epochs (batch size $200$) of different optimisers using different starting learning rates (MBT methods which are stable with starting learning rate only use starting learning rate $10^{-2}$ as default). \it{Italic}: Best accuracy of the optimiser in each row.  \textbf{Bold}: Best accuracy of all optimisers for all starting learning rates.}  
  \label{tab:optimizers}
\end{table}

\subsection{Experiment 5: Performance on different datasets and models and optimisers}\label{Experiment4}
In this experiment, we compare the performance of our new methods on different datasets and models. We see from Table \ref{tab:models} that our automatic methods work robustly with high accuracies across many different architectures, from light weight model such as MobileNetV2 to complicated architecture as DenseNet121.

\begin{table}
  \centering
  \begin{tabular}{|l|c|c||c|c|}
  \hline
    Dataset &\multicolumn{2}{c||}{CIFAR10}	& \multicolumn{2}{c|}{CIFAR100}\\
    \hline
    Optimiser			& MBT-MMT   & MBT-NAG  & MBT-MMT   & MBT-NAG\\
    \hline
    Resnet18		& $93.70$   & $93.85$   & $68.82$  & $70.66$\\
    PreActResnet18	& $93.51$	& $93.51$	& $71.98$  & $71.53$ \\
    MobileNetV2		& $93.68$   & $91.78$   & $69.89$  & $70.33$\\
    SENet 		    & $93.15$	& $93.64$	& $69.62$  & $70.61$\\
    DenseNet121	    & \bf{94.67}& $94.54$   & $73.29$  &\bf{74.51}\\
    \hline
  \end{tabular}
  \caption{(Models and Datasets) Accuracy of state-of-the-art models after $200$ training epochs. \textbf{Bold}: Best accuracy on each dataset.} \label{tab:models}
\end{table}

\section{Conclusions}
 In this paper we showed that Backtracking GD method works very well for general $C^1$ functions. In particular, if $f$ has at most countably many critical points (for example if $f$ is a Morse function) then the sequence constructed from Backtracking GD either diverges to infinity or converges to a critical point of $f$. Some modifications, including an inexact version, are also available. The inexact version is then used to propose backtracking versions for popular methods MMT and NAG, where convergence is now proven under assumptions more general than for the standard versions of them, see Subsection \ref{SectionBacktrackingMMTNAG}. We also proved another result showing that in a certain sense it is very rare for any cluster point of the sequence to be a saddle point. We presented many examples illustrating various aspects of these results. Our method can also be applied to Wolfe's conditions, see Subsection \ref{SectionComparison}. 
  
 We then provided a heuristic argument showing that in the long run, Backtracking GD should stabilise to a finite union of Standard GD processes. Based on this, we proposed several modifications (Two-way Backtracking GD, Mini-batch Backtracking GD, MBT-GD, MBT-Momentum and MBT-NAG) which helps to save time in realistic applications in DNN. These modifications provide a very good automatic fine-tuning of learning rates. In fact, experiments with the MNIST and CIFAR10 data sets show that the new methods are better current state-of-the-art methods such as MMT, NAG, Adagrad, Adadelta, RMSProp, Adam and Adamax. 

 Theoretically, the above heuristic argument also suggests that good properties of the Standard GD still hold for the Backtracking GD. In particular, we expect that the following conjecture is true concerning saddle points.   
 
\begin{conjecture}
Assume that $f:\mathbb{R}^m\rightarrow \mathbb{R}$ is a $C^1$ function and is $C^2$ near its critical points. Then the set of initial points $z_0\in \mathbb{R}^m$ for which the cluster points of the sequence $\{z_n\}$ in (\ref{Equation3}) contain a saddle point has Lebesgue measure $0$. 
 \label{Conjecture1}\end{conjecture}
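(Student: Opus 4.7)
The strategy is to adapt the stable-manifold method of \cite{lee-simchowitz-jordan-recht, panageas-piliouras} to the piecewise dynamical system defined by Backtracking GD, leveraging the heuristic from Subsection~\ref{SubsectionHeuristics}. First, I would reduce to showing that for each generalised saddle point $z_*$, the basin $B(z_*) = \{z_0 : \{z_n\} \text{ has } z_* \text{ as a cluster point}\}$ has Lebesgue measure zero. A Lindel\"{o}f-type covering argument, as in \cite{panageas-piliouras}, then reduces the statement to a countable union of such basins.

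Near a fixed $z_*$, since $f$ is $C^2$ there, $\nabla f$ is locally Lipschitz, and Lemma~\ref{Lemma2} together with the discreteness of $\{\beta^k \delta_0\}$ shows that $\delta(f,\delta_0,z)$ takes only finitely many values $\sigma_1,\ldots,\sigma_p$ on some neighbourhood $U$ of $z_*$. This decomposes Backtracking GD locally into a piecewise-defined system: $T(z) = T_i(z) := z - \sigma_i\nabla f(z)$ on $E_i = \{z \in U : \delta(f,\delta_0,z) = \sigma_i\}$. Because $z_*$ is a generalised saddle, $\nabla^2 f(z_*)$ has some negative eigenvalue $\lambda$, so each linearisation $DT_i(z_*) = I - \sigma_i\nabla^2 f(z_*)$ has eigenvalue $1 - \sigma_i\lambda > 1$; in particular, every $T_i$ has a strictly unstable direction at $z_*$, and is a local $C^1$ diffeomorphism apart from the at most countably many $\delta_0$ for which some $\sigma_i$ is a reciprocal eigenvalue (these exceptional cases can be absorbed into a lower-dimensional center-unstable manifold).

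The heart of the plan is to establish that orbits converging to $z_*$ have an eventually constant $\delta$-value. Intuitively, if $z_n \to z_*$ then the normalised differences $(z_n - z_*)/\|z_n - z_*\|$ should concentrate on the stable eigensubspace of $\nabla^2 f(z_*)$, and along such a direction the Armijo inequality --- via the quadratic Taylor expansion of $f$ at $z_*$ --- selects a unique $\sigma_i$, namely the largest element of $\{\beta^k\delta_0\}$ at most $2(1-\alpha)/\lambda_+$, where $\lambda_+$ is the effective positive curvature along the asymptotic direction of approach. Granting this lemma, there exist $N$ and $i$ with $z_{n+1} = T_i(z_n)$ for all $n \geq N$, so $z_N$ lies on the center-stable manifold $W^{cs}_i$ of $z_*$ under $T_i$, a Lipschitz submanifold of codimension at least one (Center-Stable Manifold Theorem applied to the diffeomorphism $T_i$). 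Pulling back under the piecewise-diffeomorphism $T$ gives $z_0 \in T^{-N}(W^{cs}_i)$, which, as a countable union over $N$ and $i$ of piecewise-Lipschitz preimages of null sets, has Lebesgue measure zero; hence $B(z_*)$ itself has measure zero.

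The main obstacle is precisely the eventually-constant-$\delta$ lemma. The difficulty stems from the discontinuity of the map $z \mapsto \delta(f,\delta_0,z)$ (Example~\ref{Example2}) and the a priori possibility that an orbit near $z_*$ could oscillate between regions $E_i$ indefinitely; ruling this out rigorously for trajectories whose initial direction of approach is not already an eigenvector of $\nabla^2 f(z_*)$ requires a fine control of the angle between $z_n - z_*$ and the slow stable direction. If this resists a direct proof, a fallback would be a non-autonomous stable-manifold argument: for each symbolic itinerary $\omega \in \{1,\ldots,p\}^{\mathbb{N}}$ the basin under the composition $T_{i_n}\circ\cdots\circ T_{i_0}$ is a Lipschitz graph of codimension $\geq 1$, and the remaining delicate step is to reduce the uncountable union over $\omega$ to a countable one --- for instance by proving only countably many itineraries are actually realised by orbits converging to $z_*$, or by a measurable-selection argument on a symbolic coding of the piecewise system. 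The plausibility of the conjecture is corroborated by the numerical observation in Section~\ref{Section3} that, in practice, the Backtracking learning rate does stabilise along training trajectories.
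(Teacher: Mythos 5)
You should first note that the statement you are proving is stated in the paper as Conjecture~\ref{Conjecture1}, an \emph{open} problem: the paper offers no proof, only the supporting evidence of Theorem~\ref{Theorem2} and a heuristic sketch in the Conclusions. That sketch is essentially identical to your plan --- use part~2 of Theorem~\ref{Theorem1} to upgrade ``cluster point'' to ``limit'', argue that $\delta(f,\delta_0,z_n)$ eventually takes values in a fixed finite set near the saddle, decompose the dynamics into finitely many Standard GD maps $T_i(z)=z-\sigma_i\nabla f(z)$, and invoke the stable-manifold machinery of \cite{lee-simchowitz-jordan-recht, panageas-piliouras}. So your route is the intended one, but it is a proof plan, not a proof, and the authors themselves flag it as conjectural.

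The gaps you identify are the genuine ones, and they remain open. The ``eventually-constant-$\delta$'' lemma is precisely the obstruction: Example~\ref{Example2} exhibits a $C^\infty$ function for which $z\mapsto\delta(f,\delta_0,z)$ is discontinuous at a point where $\nabla f\neq 0$, and the paper explicitly concludes that ``the idea of using dynamical systems \ldots at current cannot be used for Backtracking GD.'' Knowing that $\delta(f,\delta_0,z_n)$ lies in a finite set does not prevent the orbit from switching between the regions $E_i$ infinitely often, and your claim that the asymptotic direction of approach selects a unique $\sigma_i$ is unproven (the Armijo threshold is a boundary case along which the selection can oscillate). Your fallback over symbolic itineraries founders on the uncountable union, as you note. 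There is a further gap you gloss over in the pull-back step: outside a neighbourhood of the critical points $f$ is only $C^1$, so the piece maps $z\mapsto z-\beta^k\delta_0\nabla f(z)$ are merely continuous, not locally bi-Lipschitz, and the preimage of a null set under a continuous map need not be null --- this is exactly why \cite{lee-simchowitz-jordan-recht} must assume $f\in C^{1,1}_L$ with step size below $1/L$. In short: your plan matches the paper's intended strategy, correctly isolates the hard step, but does not close it, and the statement remains a conjecture.
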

 
 We note that this statement is stronger than the known results for the Standard GD. This is due to the fact that we have part 2) in Theorem \ref{Theorem1} for the Backtracking GD, which is unavailable for the Standard GD. 
 
Theorem \ref{Theorem2} supports this conjecture. Besides this theorem and the above heuristic argument, here is a more realistic approach and evidence towards Conjecture \ref{Conjecture1}. Assume for simplicity that  $f$ has at most countably many critical points. If the cluster points of the sequence $\{z_n\} $ contains a saddle point, then by Theorem \ref{Theorem1} the whole sequence $\{z_n\}$ converges to $z_{\infty}$. As we showed previously in this paper, since $f$ is $C^2$ around $z_{\infty}$, the sequence $\delta (f,\delta _0,z_n)$ will then take values in a fixed finite set. Thus, the Backtracking GD then should be at most a combination of a finite number of sequences constructed from the standard gradient descent method. Then we expect that results and arguments from \cite{lee-simchowitz-jordan-recht, panageas-piliouras} apply.  
 
 In the same vein, we state some other general open questions of great interest, which we hope to address in the near future. 
 
{\bf Question 1.} Can Theorem \ref{Theorem1} be extended to all $C^1$ functions or to functions on infinite dimensions $f:\mathbb{R}^{\infty}\rightarrow \mathbb{R}$? This question can have applications to other fields such as PDE, where to solve a PDE we can try to reduce to the problem of finding extremal points of functionals on a space of functions, which is of infinite dimension. 

{\bf Question 2.} Can we prove a version of Theorem \ref{Theorem1} for constrained optimisation, where the variables $x$ are constrained in a convex subset $D$ of $\mathbb{R}^m$? In convex optimisation, there is a method called projected gradient descent to deal with this problem, and we speculate that a similar procedure may work for Question 2.  

{\bf Question 3.} Is there a stochastic version of Theorem \ref{Theorem1}, as Stochastic GD is for Standard GD? Note that in the current version of Stochastic GD, Armijo's condition is not considered, and that when considering Backtracking GD, in general the learning rates $\delta (f,\delta _0,x)$ are not continuous as a function in $x$ as shown in Example \ref{Example2}.

{\bf Question 4.} What are the roles of the hyper-parameters $\alpha, \beta ,\delta _0$ in Backtracking GD? This question is a simple case of how to deal with hyper-parameters in optimisation, and itself has important implications for Deep Learning. As a first step, we will find ways to formulate this as an optimisation problem itself. The same question can be asked for MMT and NAG, as well as their backtracking versions as proposed in Section 2. 

\bibliographystyle{siamplain}

\bibliography{references}
\end{document}